\documentclass[12pt, reqno]{amsart}

\usepackage[utf8]{inputenc}
\usepackage[T1]{fontenc}
\usepackage{tikz,tikz-3dplot}
\usepackage{amsmath,amsfonts,amssymb,amsmath,latexsym,mathrsfs}
\usepackage{shuffle}
\usepackage[all]{xy}
\usepackage{stackengine}
 
\usepackage{enumerate}

\usepackage{hyperref}
 
\usepackage{tabu}
\usepackage{tikz-cd} 

\usepackage{comment}
\usepackage{url}
\usepackage{tikz}

\usepackage{xfrac}

\newtheorem{theorem}{Theorem}[section]

\newtheorem{proposition}[theorem]{Proposition}
\newtheorem{definition}[theorem]{Definition}

\newtheorem{corollary}[theorem]{Corollary}

\newtheorem{remark}[theorem]{Remark}

\newtheorem{lemma}[theorem]{Lemma} 

\newtheorem{example}[theorem]{Example}

\newcommand{\C}{ {\mathbb C} }

\begin{document}

\title{On irreducible representations of conjugacy quandles}
\keywords{Groups, Quandles, envelopping groups, representations, Schur multipliers, Schur covers}

\author{Mohamad \textsc{Maassarani} }
\maketitle
\begin{abstract}For $G$ a finite group, one way to construct irreducible quandle representations over $\mathbb{C}$ of the conjugacy quandle $Conj(G)$ is by taking the product of an irreducible linear group representation of $G$ by what we call a quandle character of $Conj(G)$ (a quandle morphism into $\mathbb{C}^\times$ ). We show that these are all the irreducible quandle representations of $Conj(G)$ over $\mathbb{C}$ if and only if all the symmetric $2$-cocyles over $G$  ($\alpha(g,h)=\alpha(h,g)$ for all $g,h$) with values in $\mathbb{C}^\times$ are coboundaries. For instance, this is the case of groups with trivial Bogomolov multiplier. We apply this to study the envelopping group of $Conj(G)$. If $G$ finite satisfies the previous condition on symmetric $2$-cocycles, we obtain that the enveloping group of $Conj(G)$ injects into $G\times \mathbb{Z}^{c_G}$ where $c_G$ is the number of the conjugacy classes of $G$. If moreover $G$ is perfect the injection is an isomorphism.
\end{abstract}
\section*{Introduction and main results}
A quandle is set equiped with binary operation satisfying some axioms. To a group $G$ one can associate a quandle $Conj(G)$ called the conjugacy quandle of $G$. The underlining set of $Conj(G)$ is $G$. A quandle representation of $Conj(G)$ is a quandle morphism $\rho : Conj(G)\to Conj(GL(V))$ ($\rho(hgh^{-1})=\rho(h)\rho(g)\rho(h)^{-1}$) for some vector space $V$. A quandle representation $Conj(G)\to Conj(GL(V))$ is irreducible if the image of the morphism stabilizes non subspace of $V$ other than $V$ and $0$. In \cite{GQ}, we proved than an irreducible quandle representation over $\mathbb{C} $ ($V$ is a complex vector space) of a finite quandle is finite dimensional. In \cite{Qrep}, we prove, under some assumptions on a finite group $G$, that the irreducible quandle representation of $Conj(G)$ over $\mathbb{C}$ are the products of what we call quandle characters of $Conj(G)$ and irreducible linear representations of the group $G$. Here we prove that all irreducible quandle representations of $Conj(G)$ over $\mathbb{C}$ ($G$ finite) are of this form if and only if any symmetric $2$-cocycle of $G$ with values in $\mathbb{C}^\times$ is a coboundary. This condition on symmetric $2$-cocycle is satisfied for instance by groups with trivial Bogomolov multiplier.  We apply these results to study the envelopping groups of conjugacy quandles. The notes are divided into $3$ sections. In the first section, we recall the basic definitions. In the second section, we provide proofs of the results concerning irreducible quandle representations. The third section is devoted to the study of envelopping groups of conjugacy quandles.

\section{Reminders on quandles and projective representations of groups}
\subsection{Quandles}
\begin{definition}
 A quandle is a set $Q$ equipped with a binary operation $\triangleright$  such that :
\begin{itemize}
\item[-] $x\triangleright x=x$ for $x\in Q$.
\item[-] For all $x,y \in Q$ there exist a unique $z\in Q$, such that $x \triangleright z=y$.
\item[-] $x \triangleright( y\triangleright z)= (x \triangleright y) \triangleright (x \triangleright z)$, for $x,y,z \in Q$. 
\end{itemize}
\end{definition}
One can constructe a quandle from a group $G$ using the binary operation defined by $x\triangleright y= xyx^{-1}$  for $x,y\in G$. The defined quandle is called the conjugacy quandle and is usually denoted by $Conj(G)$.

\begin{definition}
 A quandle morphism is a map between two quandles $(Q_1,\triangleright_1)$ and $(Q_2 ,\triangleright_2)$ such that $f(z \triangleright_1 w)=f(z) \triangleright_2 f(w)$, for $z,w \in Q$.
\end{definition}
The composition of two quandle morphism is a quandle morphism. A group morhism $f:G\to H$ is quandle morphism if we regard both sets as the conjugacy quandles. For $Q$ a quandle the left translation $L_x$ by $x\in Q$ given by $L_x(y)=x\triangleright y$ for $y\in Q$ is a bijective quandle morphism  
\begin{definition}
The inner automorphism group $Inn(Q)$ of a quandle $Q$ is the subgroup of bijections of $Q$ generated by the left translations.
\end{definition}
\begin{proposition}
The map $Q\to Inn(Q), x\mapsto L_x$ is a quandle morphism if $Inn(Q)$ is regarded as the conjugacy quandle of the group $Inn(Q)$.
\end{proposition}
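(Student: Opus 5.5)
We need to show that $L_{x\triangleright y} = L_x \circ L_y \circ L_x^{-1}$ as bijections of $Q$, since the quandle operation in $Conj(Inn(Q))$ is $f\triangleright g = fgf^{-1}$. Once this identity holds for all $x,y\in Q$, the map $x\mapsto L_x$ satisfies $L_{x\triangleright y}=L_x\triangleright L_y$, which is exactly the definition of a quandle morphism.

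First I record that each $L_x$ is a bijection. Injectivity and surjectivity both follow from the second quandle axiom: for every $y$ there is a unique $z$ with $x\triangleright z=y$. Hence $L_x^{-1}$ makes sense and lies in $Inn(Q)$. (The paper already asserts this just before the definition of $Inn(Q)$, so this can be cited.)

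Next I use the third axiom, left self-distributivity. Written with left translations, $x\triangleright(y\triangleright z)=(x\triangleright y)\triangleright(x\triangleright z)$ reads
\[
L_x(L_y(z)) = L_{x\triangleright y}(L_x(z)) \quad \text{for all } z\in Q,
\]
that is, $L_x\circ L_y = L_{x\triangleright y}\circ L_x$ as maps $Q\to Q$. Composing on the right with $L_x^{-1}$ gives $L_{x\triangleright y}=L_x L_y L_x^{-1}$, which is the required identity.

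There is no real obstacle here. The only care needed is to have bijectivity of $L_x$ in hand before inverting it, and to note that the first axiom $x\triangleright x=x$ is not needed. It would only be relevant for checking that the image of the map is consistent with idempotency in $Conj(Inn(Q))$, and that holds automatically there, since $L_x L_x L_x^{-1}=L_x$.
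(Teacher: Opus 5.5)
The paper states this proposition without proof. Your argument is the standard one and is correct and complete: the second axiom makes $L_x$ a bijection, and the third axiom is exactly the identity $L_x\circ L_y=L_{x\triangleright y}\circ L_x$, which gives $L_{x\triangleright y}=L_xL_yL_x^{-1}$.
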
  
For $G$ a group, the group $Inn(Conj(G))$ is equal to $Inn(G)$.

\begin{definition}{\cite{rep}}Let $Q$ be a quandle
\begin{itemize}
\item[1)] A linear representation of $Q$ is the data of a vector space $V$ and a quandle morphism $\rho: Q \to Conj(\mathrm{GL}(V))$, i.e. : 
$$\rho(x\triangleright y)=\rho(x) \rho(y) \rho(x)^{-1},$$
for all $x,y \in Q$.
\item[2)] A subrepresentation of $\rho$ is a vector subspace $W\subset V$, such that $\rho(x)(W) \subset W$ for all $x\in Q$.
\item[3)] An irreducible representation is a representation $V$ that has no subrepresentations other than $0$ and $V$.
\end{itemize}
\end{definition} 
A linear group representation $\rho : G \to GL(V)$ is a quandle representation of $Conj(G)$.
\begin{definition}{\cite{Qrep}}
A character of a quandle $Q$ is a quandle morphism $\chi : Q \to \mathbb{C}^\times$, i.e. :$$\chi(x\triangleright y) =\chi(y),$$ for $x,y\in Q$.
\end{definition}
\begin{definition}
The enveloping group $G(Q)$ of a quandle $Q$ is the group :
$$G(Q)=\langle x \in Q \vert xyx^{-1}=x\triangleright y , \text{ for } x,y\in Q\rangle.$$
\end{definition}
The enveloping group is also known as : the associated group, adjoint group or structure group. The group $G(Q)$ is infinite.
\begin{proposition}
 
\item[1)] The map $\varphi_Q : Q \to G(Q)$ mapping $x\in Q$ to the corresponding generator is a quandle morphism for the conjugcay quandle structure on $G(Q)$.
\item[2)]For $f:Q\to Conj(G)$ a quandle morphism there is a unique group morphism $\tilde{f}:G(Q)\to G$ such $f=\tilde{f}\circ\varphi_Q$. The map $\varphi_Q: Q \to G(Q)$ is universal with respect to morphism from $Q$ to conjugacy quandles of groups.
 
\end{proposition}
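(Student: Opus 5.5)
Both parts come straight from the presentation of $G(Q)$ as a quotient of the free group $F(Q)$ on the set $Q$; write $\pi: F(Q)\to G(Q)$ for the quotient map and $R$ for the normal subgroup generated by the words $xyx^{-1}(x\triangleright y)^{-1}$, $x,y\in Q$. Then $G(Q)=F(Q)/R$.

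For 1), I will check directly that $\varphi_Q(x)=\pi(x)$ respects the operations. The relation $xyx^{-1}=x\triangleright y$ holds in $G(Q)$ by construction. So
$$\varphi_Q(x\triangleright y)=\pi(x\triangleright y)=\pi(x)\pi(y)\pi(x)^{-1}=\varphi_Q(x)\triangleright\varphi_Q(y)$$
in $Conj(G(Q))$.

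For 2), let $f:Q\to Conj(G)$ be a quandle morphism.

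\emph{Existence.} By the universal property of the free group, $f$ extends to a group morphism $F:F(Q)\to G$. Since $f$ is a quandle morphism,
$$F\bigl(xyx^{-1}(x\triangleright y)^{-1}\bigr)=f(x)f(y)f(x)^{-1}f(x\triangleright y)^{-1}=1.$$
So every defining relator lies in $\ker F$. As $\ker F$ is normal, it contains $R$, and $F$ factors through a group morphism $\tilde f:G(Q)\to G$ with $\tilde f\circ\varphi_Q=f$.

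\emph{Uniqueness.} Any group morphism $G(Q)\to G$ is determined by its values on the generators $\varphi_Q(x)$, $x\in Q$. These values are forced to be $f(x)$.

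\emph{Universality.} The last sentence of 2) is exactly this existence and uniqueness, rephrased as $\varphi_Q$ being initial among quandle morphisms from $Q$ to conjugacy quandles of groups.

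No step here is a real obstacle; the only point needing care is that the relations generate a normal subgroup, and it is this normal closure that the kernel of $F$ must contain. This holds automatically because kernels are normal.
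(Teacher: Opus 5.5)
Your proof is correct and complete. It is the standard verification through the presentation $G(Q)=F(Q)/R$: extend by the universal property of the free group, note that the relators die because $f$ is a quandle morphism, factor through the normal closure, and get uniqueness from generation. The paper gives no proof and simply records the proposition as a standard consequence of the definition of $G(Q)$. The only blemish is notation: you use $F$ both for the free group and for the extension $F(Q)\to G$, and your $\pi$ for the quotient map clashes with the paper's later $\pi:A(G)\to G$.
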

\subsection{Projective representations}
 In this subsection, the groups are \textbf{finite} and representations are \text{over} $\mathbb{C}$.
\begin{definition}
\item[1)]  A projective representation of a group $G$ is a morphism $G\to PGL(V)$.
\item[2)]  A projective representation $G \to PGL(V)$ is irreducible if $P(V)$ admit no proper projective subspace stable by $G$.
\end{definition} 
Let $\rho : G \to PGL(V)$ be a projective representation and $\tilde{\rho}$ be a lift of $\rho$ to $GL(V)$ such that $\tilde{\rho}(1)=Id$. For $g,h\in G$, we have:
$$\tilde{\rho}(gh)=\alpha(g,h)\tilde{\rho}(g)\tilde{\rho}(h),$$
for some $\alpha(g,h)\in \mathbb{C}^\times$. The mapping $\alpha :G\times G\to \mathbb{C}^\times,(g,h)\mapsto \alpha(g,h)$ is a $2$-cocycle with values in $\mathbb{C^\times}$. If $\tilde{\rho}'$ is another lift of $\rho$ to $GL(V)$ such that $\tilde{\rho}'(1)=Id$, then the cocycles $\alpha$ and the one associated to $\tilde{\rho}'$ will differ by a boundry. This shows that $\rho$ definies a cohomology class in $H^2(G,\mathbb{C}^\times)$ . The group $H^2(G,\mathbb{C}^\times)$ is called the \textbf{Schur multiplier} of $G$. The Schur multiplier of a finite group is finite.
\begin{proposition}
 A projcetive representation $\rho :G \to PGL(V)$ lifts to a linear representation $G\to GL(V)$ if and only if its class in $H^2(G,\mathbb{C}^\times)$ is trivial.
\end{proposition}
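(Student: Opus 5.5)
We prove the two implications separately, working with a fixed lift $\tilde{\rho}:G\to GL(V)$ of $\rho$ normalized by $\tilde{\rho}(1)=Id$ and with associated $2$-cocycle $\alpha$, so that $\tilde{\rho}(gh)=\alpha(g,h)\tilde{\rho}(g)\tilde{\rho}(h)$ for all $g,h\in G$. Both directions are direct manipulations of this identity.

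For the implication ``lifts $\Rightarrow$ trivial class'', suppose $\sigma:G\to GL(V)$ is a group morphism with $\pi\circ\sigma=\rho$, where $\pi:GL(V)\to PGL(V)$ is the projection. Then $\sigma$ is itself a lift with $\sigma(1)=Id$. Hence $\sigma(g)=\lambda(g)\tilde{\rho}(g)$ for some function $\lambda:G\to\mathbb{C}^\times$ with $\lambda(1)=1$. Substituting into $\sigma(gh)=\sigma(g)\sigma(h)$ and comparing with the defining relation for $\alpha$ gives
$$\alpha(g,h)=\frac{\lambda(g)\lambda(h)}{\lambda(gh)}.$$
So $\alpha$ is a coboundary, and its class in $H^2(G,\mathbb{C}^\times)$ is trivial. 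This is the same computation the paper uses to show that the class does not depend on the chosen lift.

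For the converse, suppose $\alpha$ is a coboundary, so $\alpha(g,h)=\frac{\lambda(g)\lambda(h)}{\lambda(gh)}$ for some $\lambda:G\to\mathbb{C}^\times$. Normalizing at $g=h=1$ gives $\alpha(1,1)=\lambda(1)$. Since $\tilde{\rho}(1)=Id$ forces $\alpha(1,1)=1$, we get $\lambda(1)=1$. Define $\sigma(g)=\lambda(g)\tilde{\rho}(g)$. Then
$$\sigma(gh)=\lambda(gh)\,\alpha(g,h)\,\tilde{\rho}(g)\tilde{\rho}(h)=\lambda(g)\lambda(h)\,\tilde{\rho}(g)\tilde{\rho}(h)=\sigma(g)\sigma(h).$$
Thus $\sigma$ is a group morphism $G\to GL(V)$, and its image in $PGL(V)$ is still $\rho$ because we only rescaled by scalars.

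The only delicate point is sign conventions. With the paper's convention $\tilde{\rho}(gh)=\alpha(g,h)\tilde{\rho}(g)\tilde{\rho}(h)$, a rescaling by $\lambda$ changes $\alpha$ by the coboundary $\lambda(g)\lambda(h)/\lambda(gh)$ (or its inverse, depending on convention). We fix the convention once and use it consistently in both directions. Finiteness of $G$ and working over $\mathbb{C}$ are not needed for this statement.
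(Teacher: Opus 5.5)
Your proof is correct. The normalization $\alpha(1,1)=\lambda(1)=1$ is handled properly, and the remark on coboundary sign conventions is harmless, since $\lambda$ and $\lambda^{-1}$ give inverse coboundaries.

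The paper gives no proof of this statement. It is recalled as a standard fact in the reminders section, accompanied only by the unproved assertion that two normalized lifts give cocycles differing by a coboundary. So your claim that the paper "uses" this computation slightly overstates things. Your rescaling argument is the standard textbook proof that the paper takes for granted. You are also right that neither finiteness of $G$ nor the base field $\mathbb{C}$ is needed.
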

A proof of the following can be found in \cite{Sc} (p. 40-42) :
\begin{proposition}
To each class $\alpha$ of the Schur multiplier $H^2(G,\mathbb{C}^\times)$ there is at least one irreducible projective representation of $G$ associated to $\alpha$.
\end{proposition}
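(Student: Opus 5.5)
The standard route goes through the twisted group algebra. Fix a normalized $2$-cocycle $\alpha$ representing the given class of $H^2(G,\mathbb{C}^\times)$. The convention of the paper is $\tilde\rho(gh)=\alpha(g,h)\tilde\rho(g)\tilde\rho(h)$. It is convenient to set $\beta=\alpha^{-1}$, so that the lifts we want satisfy $\tilde\rho(g)\tilde\rho(h)=\beta(g,h)\tilde\rho(gh)$. Since $\alpha$ is normalized, we may assume $\alpha(1,g)=\alpha(g,1)=1$; if not, replace $\alpha$ by a cohomologous normalized cocycle.

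We form the twisted group algebra $\mathbb{C}^\beta[G]$. It has basis $\{e_g\}_{g\in G}$ and multiplication $e_ge_h=\beta(g,h)e_{gh}$. The cocycle identity for $\beta$ is exactly associativity, and $e_1$ is the unit by normalization. Any nonzero finite-dimensional left module $V$ of this algebra gives a map $\tilde\rho(g)=$ (action of $e_g$) with the required multiplication rule. Each $e_g$ is invertible, because $e_g e_{g^{-1}}=\beta(g,g^{-1})e_1$ with $\beta(g,g^{-1})\neq0$. Hence $\tilde\rho$ lands in $GL(V)$, and composing with $GL(V)\to PGL(V)$ gives a projective representation whose class is that of $\alpha$.

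Next we check that $\mathbb{C}^\beta[G]$ is semisimple, which is Maschke's theorem in the twisted setting. The algebra is finite-dimensional, of dimension $|G|$, and nonzero. Given a submodule $W\subset V$, take any linear projection $p:V\to W$ and average it:
$$p'=\frac{1}{|G|}\sum_{g\in G}\tilde\rho(g)^{-1}\,p\,\tilde\rho(g).$$
The scalars $\beta$ cancel in the conjugation $\tilde\rho(g)^{-1}\,\cdot\,\tilde\rho(g)$, so $p'$ is a module map that is still a projection onto $W$. Its kernel is therefore a stable complement. Alternatively, one can skip semisimplicity and just use finite-dimensionality: the regular module $\mathbb{C}^\beta[G]$ has a nonzero submodule of minimal dimension, and that submodule is simple.

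Finally we take $V$ to be a simple left submodule of the regular module $\mathbb{C}^\beta[G]$. It exists because the algebra is finite-dimensional and nonzero. Suppose a proper nonzero subspace $W\subset V$ were stable under all $\tilde\rho(g)$ up to scalars, i.e. $\tilde\rho(g)W=W$. Then $W$ would be stable under the span of the $e_g$, which is the whole algebra, so $W$ would be a submodule. This contradicts simplicity. So the associated projective representation is irreducible: no proper projective subspace of $P(V)$ is stable.

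The main obstacle is the bookkeeping between $\alpha$ and $\beta$, so that the class obtained is exactly $\alpha$ and not $\alpha^{-1}$, together with the normalization. Both are handled by choosing the twist $\beta=\alpha^{-1}$ as above.
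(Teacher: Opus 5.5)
Your proof is correct. The paper does not prove this statement itself; it only refers to \cite{Sc}, pp.~40--42. Your twisted group algebra argument is the standard self-contained proof of this fact, and all its steps check out.

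With $\beta=\alpha^{-1}$ and $V$ a minimal nonzero left ideal of $\mathbb{C}^{\beta}[G]$, the operators $\tilde\rho(g)$ satisfy $\tilde\rho(1)=\mathrm{Id}$ and $\tilde\rho(gh)=\alpha(g,h)\tilde\rho(g)\tilde\rho(h)$. So the class obtained is $[\alpha]$ in the paper's convention, not its inverse. A subspace stable under every $\tilde\rho(g)$ is stable under their linear span, which is the whole algebra, so it is $0$ or $V$. As you observe yourself, the twisted Maschke paragraph is not needed for existence.

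The other classical route goes through a Schur cover $1\to M\to \Gamma\to G\to 1$, and it is closer to the paper's own later use of $1\to Z_1\to A(G)\to G\to 1$. You pick a character $\lambda$ of $M$ whose transgression is the given class. You then take an irreducible constituent of $\mathrm{Ind}_M^{\Gamma}\lambda$, on which the central subgroup $M$ acts by $\lambda$. Finally you compose with a set-theoretic section $G\to\Gamma$, just as $\bar\rho$ is defined from a section of $\pi$ in the paper. That route needs a cover whose transgression is surjective. Yours avoids this and is the more elementary of the two.
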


\section{irreducible representations of quandles}
Let $Q$ be a quandle.  If $\rho : Q \to GL(V)$ be a quandle representation then by the universal property of $ G(Q)$ there is a unique group representation $\rho_{G(Q)}:G(Q)\to GL(V)$ such that $\rho=\rho_{G(Q)}\circ \varphi_Q$. 
.
\begin{proposition} 
\item[1)] The assignement $\rho \to \rho_{G(Q)}$ is a one to one correspondance between quandle representations of $Q$ and group representations of $G(Q)$.
\item[2)] A quandle representation $\rho$ is irreducible if and only if the group representation $\rho_{G(Q)}$ is irreducible.
 
\end{proposition}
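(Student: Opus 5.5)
Both parts follow from the universal property of $G(Q)$ recalled in the preceding proposition, together with the observation that the image of $\varphi_Q$ generates $G(Q)$. For 1), take a quandle representation $\rho : Q\to Conj(GL(V))$. The universal property with target group $GL(V)$ gives a unique group morphism $\rho_{G(Q)} : G(Q)\to GL(V)$ with $\rho=\rho_{G(Q)}\circ\varphi_Q$. In the other direction, given a group representation $\sigma : G(Q)\to GL(V)$, a group morphism is a quandle morphism for the conjugacy structures, and a composition of quandle morphisms is a quandle morphism. Hence $\sigma\circ\varphi_Q$ is a quandle representation of $Q$.

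The two assignments are mutually inverse. First, $(\rho_{G(Q)})\circ\varphi_Q=\rho$ by construction. Second, $(\sigma\circ\varphi_Q)_{G(Q)}=\sigma$ by the uniqueness part of the universal property. Alternatively, two group morphisms out of $G(Q)$ that agree on the generators $\varphi_Q(x)$, $x\in Q$, are equal.

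For 2), the point is that a subspace $W\subset V$ is stable under $\rho(x)=\rho_{G(Q)}(\varphi_Q(x))$ for all $x\in Q$ if and only if it is stable under $\rho_{G(Q)}(g)$ for all $g\in G(Q)$. One direction is immediate. For the other, every element of $G(Q)$ is a finite product of generators $\varphi_Q(x)$ and their inverses. Each $\rho(x)$ is invertible and maps $W$ into $W$. When $W$ is finite dimensional, this forces $\rho(x)(W)=W$, so $\rho(x)^{-1}$ also preserves $W$. Products of operators preserving $W$ preserve $W$, which gives the claim.

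The only delicate step is the inverses when $W$ is infinite dimensional, where injectivity alone does not give $\rho(x)(W)=W$. There I would use the quandle structure. By the quandle axioms, $L_x$ is a bijection of $Q$ that generates a group acting on $Q$. When $Q$ is finite (the case of interest, $Conj(G)$ with $G$ finite), $L_x$ has finite order $n$. Since $\varphi_Q$ is a quandle morphism, $\varphi_Q(x)^n$ commutes with every generator, but this alone does not make $\rho(x)^{-1}$ a polynomial in $\rho(x)$.

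The cleaner route is to note that irreducible representations of a finite quandle are finite dimensional (recalled from \cite{GQ} in the introduction). So it suffices to work with finite-dimensional $W$. Alternatively, one can restrict the statement to finite-dimensional $V$, where the argument above applies directly.

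Once stability under the $\rho(x)$ and under $\rho_{G(Q)}(G(Q))$ are identified, the two representations have the same invariant subspaces, so $\rho$ is irreducible if and only if $\rho_{G(Q)}$ is.
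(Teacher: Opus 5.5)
Your treatment of 1) and of the direction ``$\rho$ irreducible $\Rightarrow$ $\rho_{G(Q)}$ irreducible'' is the universal-property argument the paper takes for granted; the paper gives no proof. You also correctly isolate the inverses $\rho(x)^{-1}$ as the only delicate point. Two corrections concern the converse direction.

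First, the finiteness result you invoke is not the one you need. The inverses only matter when you assume $\rho_{G(Q)}$ is irreducible and must show that a subspace $W$ stable under all $\rho(x)$ is $0$ or $V$. At that point you do not yet know that $\rho$ is irreducible. So the statement from the introduction (irreducible \emph{quandle} representations of a finite quandle are finite dimensional) gives you nothing. Its contrapositive only says that an infinite-dimensional $\rho$ is reducible, which is circular here. What you need is the other clause of the finite-dimensionality proposition quoted from GQ in Section 2: for $Q$ finite, an irreducible representation of $G(Q)$ is finite dimensional. Then $V$, hence $W$, is finite dimensional, each $\rho(x)$ restricts to a bijection of $W$, and your argument closes.

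Second, the restriction to finite $Q$ (or finite-dimensional $V$) is genuinely necessary: for an arbitrary quandle, as the paper literally states it, 2) is false.
\begin{itemize}
\item Let $Q$ be the trivial quandle ($x\triangleright y=y$) on the set $\mathbb{C}$, so $G(Q)$ is free abelian on $\mathbb{C}$. Let $a\in Q$ act on $V=\mathbb{C}(t)$ by multiplication by $t-a$.
\item The products $\prod_a (t-a)^{n_a}$ span $\mathbb{C}(t)$: write $p/q=\sum_i c_i\, t^i/q$ with $q$ monic.
\item Hence a $G(Q)$-stable subspace is an ideal of the field $\mathbb{C}(t)$, and $\rho_{G(Q)}$ is irreducible.
\item Yet $\mathbb{C}[t]$ is stable under every $\rho(a)$, so $\rho$ is reducible.
\end{itemize}
The paper only applies the proposition to $Conj(G)$ with $G$ finite. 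Your restricted version is therefore both the true one and the one actually used.
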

 
For a group $G$, we will use the following \textbf{notations} :
$$A(G):=G(Conj(G)),$$
$$\varphi_{G}:=\varphi_{Conj(G)}.$$
With this notations $\varphi_G$ is the universal map $G\to A(G)$. We recall that it is a quandle morphism. The identity map  $G\to G$ is a quandle morphism with respect to the conjugacy quandle structure of $G$. Hence we have a unique \textbf{group morphism} $\pi :A(G) \to G$ such that the following diagram commutes :

$$\begin{tikzcd}
A(G) \arrow{r}{\pi} & G  \\ 
G\arrow{u}{\varphi_G} \arrow{ru}{\mathrm{id}} & 
\end{tikzcd} .$$ 

 \begin{proposition}
The kernel of $\pi : A(G)\to G$ is a subgroup of the center of $A(G)$.
\end{proposition}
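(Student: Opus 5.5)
The plan is to show that every generator of $A(G)$ acts on $\ker\pi$ by conjugation in the same way its image under $\pi$ acts, and then use that $\ker\pi$ is central-izable through the conjugacy relations. Write $e_g:=\varphi_G(g)$ for $g\in G$. These elements generate $A(G)$ and satisfy $e_g e_h e_g^{-1}=e_{ghg^{-1}}$.

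\textbf{Step 1: a conjugation formula.} By induction on word length, for any $a\in A(G)$ and $h\in G$ we have
$$a\, e_h\, a^{-1}=e_{\pi(a)h\pi(a)^{-1}}.$$
For a single generator $a=e_g$ this is exactly the defining relation, since $\pi(e_g)=g$.

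For $a=e_g^{-1}$, the defining relation applied to $g^{-1}hg$ gives $e_g e_{g^{-1}hg}e_g^{-1}=e_h$, hence $e_g^{-1}e_h e_g=e_{g^{-1}hg}$.

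Products are then handled by induction, using that $\pi$ is a group morphism: if $a=bc$, then
$$a e_h a^{-1}=b\,e_{\pi(c)h\pi(c)^{-1}}\,b^{-1}=e_{\pi(b)\pi(c)h\pi(c)^{-1}\pi(b)^{-1}}.$$

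\textbf{Step 2: conclusion.} Take $a\in\ker\pi$. Step 1 gives $a e_h a^{-1}=e_h$ for every $h\in G$. Thus $a$ commutes with every element of a generating set of $A(G)$, so $a$ lies in the center of $A(G)$.

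The only step needing care is the inverse-generator case in Step 1, and it is handled by applying the defining relation to $g^{-1}hg$ as above. There is no real obstacle beyond this.
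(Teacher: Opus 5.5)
Your proof is correct, and it is a genuinely different route from the paper's at the key step. The paper composes $\pi$ with the morphism $G\to Inn(G)$ sending $g$ to conjugation by $g$. By the universal property, this composite is the canonical map $\tilde{\theta}:A(G)\to Inn(Conj(G))$, so $\ker\pi\subseteq\ker\tilde{\theta}$. The paper then cites from the literature the general fact that, for any quandle $Q$, the kernel of $G(Q)\to Inn(Q)$ is central in $G(Q)$.

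Your conjugation formula $a\,e_h\,a^{-1}=e_{\pi(a)h\pi(a)^{-1}}$ is in effect the standard argument behind that cited fact, specialized to $Conj(G)$: here $\pi(a)h\pi(a)^{-1}=\tilde{\theta}(a)(h)$. What your approach buys is a self-contained proof that avoids both the detour through $Inn(G)$ and the external reference. It also shows directly that the larger subgroup $\pi^{-1}(Z(G))=\ker\tilde{\theta}$ is central, since $\pi(a)\in Z(G)$ already forces $a\,e_h\,a^{-1}=e_h$. The paper's route instead places the statement inside the general quandle-theoretic picture valid for every quandle, at the price of outsourcing the essential computation.
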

\begin{proof}
Let $\alpha :G \to Inn(G)$ be the map assigning to $g$ the conjugacy by $g$. We have that $Inn(G)=Inn(Conj(G))$ and $\alpha$ is exactly the quandle morphism $\theta :Conj(G)\to Inn(Conj(G))$ mapping  $g$ to the left translation (in $Conj(G)$) by $g$. The map $\theta$ extends to a unique group morphism $\tilde{\theta} :A(G) \to Inn(Conj(G))$. By uniqueness $\tilde{\theta}=\alpha\circ \pi $. Hence, the kerenel of $\pi$ is a subgroup of the kernel of $\tilde{\theta}$. It is know that the kernel of $\tilde{\theta}$ lies in the center of $A(G)$ (\cite{EM}).
\end{proof}
\begin{proposition}{\cite{GQ}}
For $Q$ a finite quandle, an irreducible representation of $Q$ $($respectively of $G(Q)$$)$ over $\mathbb{C}$ is finite dimensional.
\end{proposition}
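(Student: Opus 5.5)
We must show that for a finite quandle $Q$, every irreducible representation of $Q$, or equivalently of $G(Q)$, over $\mathbb{C}$ is finite dimensional. By the correspondence proposition above, $\rho\mapsto\rho_{G(Q)}$ is a bijection that preserves irreducibility. So it is enough to treat an irreducible group representation $\rho: G(Q)\to GL(V)$, where $V$ may a priori be of any dimension. The plan is to exhibit a central subgroup of finite index in $G(Q)$ that acts by scalars, and then use that $V$ is spanned by finitely many translates of a single vector.

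\textbf{Central finite-index subgroup.} Let $\tilde\theta: G(Q)\to Inn(Q)$ be the morphism extending $x\mapsto L_x$. Its kernel $K$ is central in $G(Q)$, as recalled in the proof of the previous proposition, citing \cite{EM}. Since $Q$ is finite, $Inn(Q)$ is a subgroup of the finite group of permutations of $Q$. Hence $K$ has finite index $n$ in $G(Q)$. Choose coset representatives $g_1,\dots,g_n$.

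\textbf{$K$ acts by scalars.} This is the main obstacle, because Schur's lemma must be used in a form valid for countable dimension. First, $G(Q)$ is finitely generated, hence countable. Take any $0\neq v\in V$. By irreducibility, $V$ is spanned by the countable set $\{\rho(g)v\}$, so $V$ has countable dimension over $\mathbb{C}$. Dixmier's version of Schur's lemma then applies. Every $G(Q)$-endomorphism $T$ of $V$ is injective or zero, so $\mathrm{End}_{G(Q)}(V)$ is a division algebra. The map $T\mapsto T(v)$ is injective on it, so the algebra has countable dimension. If some $T$ were not scalar, it would be transcendental over $\mathbb{C}$. Then the elements $(T-\lambda)^{-1}$, for uncountably many $\lambda\in\mathbb{C}$, would be linearly independent, which is a contradiction. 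Each $\rho(k)$ with $k\in K$ commutes with $\rho(G(Q))$ because $K$ is central. Hence $\rho(k)$ is a scalar $\lambda(k)$.

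\textbf{Conclusion.} Fix $0\ne v\in V$ and let $W$ be the span of $\rho(g_1)v,\dots,\rho(g_n)v$. Any $g\in G(Q)$ can be written $g=g_ik$ with $k\in K$. Then $\rho(g)v=\lambda(k)\rho(g_i)v\in W$. Applying this to $gg_j$ shows that $\rho(g)\rho(g_j)v\in W$, so $W$ is stable. By irreducibility $W=V$, hence $\dim V\le n=|Inn(Q)|$. The same bound passes to irreducible quandle representations through the correspondence proposition.
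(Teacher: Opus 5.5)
Your proof is correct and complete. The paper gives no argument for this statement; it imports it from \cite{GQ}, so there is no in-text proof to compare against. Your proof is self-contained and uses the mechanism the paper relies on right afterwards in Proposition~\ref{pro}: the kernel $K$ of $\tilde\theta\colon G(Q)\to Inn(Q)$ is central of finite index, so it acts by scalars on an irreducible representation. The paper only invokes ``central elements act by scalars'' once finite dimensionality is available, whereas you need it beforehand. You avoid the circularity correctly with Dixmier's countable-dimension Schur lemma, where countability of $\dim V$ comes from $G(Q)$ being finitely generated.

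Two small completions are worth adding.

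\begin{itemize}
\item The paper states centrality of $\ker\tilde\theta$ only for $A(G)$. For a general finite quandle it follows from the identity $g\,\varphi_Q(x)\,g^{-1}=\varphi_Q(\tilde\theta(g)(x))$. This identity holds when $g$ is a generator or the inverse of one, by the defining relations, and hence for all $g\in G(Q)$.
\item For $\mathrm{End}_{G(Q)}(V)$ to be a division algebra, a nonzero $T$ must also be surjective. This holds because $T(V)$ is a nonzero subrepresentation.
\end{itemize}

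Your argument also gives more than finiteness. Since $K$ acts by scalars, $\rho$ descends to an irreducible projective representation of the finite group $G(Q)/K\cong Inn(Q)$, mirroring the paper's construction of $\bar\rho$. Besides your bound $\dim V\le|Inn(Q)|$, counting dimensions in the twisted group algebra of $Inn(Q)$ gives $\dim V\le\sqrt{|Inn(Q)|}$.
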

From now on $G$ is a \textbf{finite} group and $\rho : G \to Conj(GL(V))$ is an \textbf{irreducible} quandle repesentation \textbf{over} $\C$. In particular $V$ is finite dimensional. All the represenatations we will consider are \textbf{over} $\mathbb{C}$. Let $s$ be a section of $\pi:A(G)\to G$. Define $\bar{\rho}: G \to PGL(V)$ by :  
$$\bar{\rho}(g)=\overline{\rho_{A(G)}(s(g))},$$
where $\rho_{A(G)}$ is the group representation of $A(G)$ induced by $\rho$ and $\overline{\rho_{A(G)}(s(g))}$ is the class of $\rho_{A(G)}(s(g))$ in $PGL(V)$ for $g\in G$. 
\begin{proposition}\label{pro}
\item[1)] $\bar{\rho}$ is an irreducible projective representation of $G$ and does not depend on the choice of the section $s$.
\item[2)] For $g\in G$, $\bar{\rho}(g)=\overline{\rho(g)}$, where $\overline{\rho(g)}$ is the class of $\rho(g)$ in $PGL(V)$.
 \end{proposition}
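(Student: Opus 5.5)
The argument rests on Schur's lemma together with the earlier proposition that $\ker \pi$ is central in $A(G)$. Since $\rho$ is irreducible and $V$ is finite dimensional, the corresponding group representation $\rho_{A(G)}$ is irreducible, by the correspondence between quandle representations of $Q$ and group representations of $G(Q)$. Take $z\in\ker\pi$. Then $z$ is central, so $\rho_{A(G)}(z)$ commutes with $\rho_{A(G)}(A(G))$, and Schur's lemma gives $\rho_{A(G)}(z)=\lambda_z\,\mathrm{Id}$ for some $\lambda_z\in\mathbb{C}^\times$.

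Independence of the section follows. If $s,s'$ are two sections, then $\pi(s(g))=\pi(s'(g))$, so $s'(g)=s(g)z$ with $z\in\ker\pi$. Hence $\rho_{A(G)}(s'(g))=\lambda_z\rho_{A(G)}(s(g))$, and the two elements have the same class in $PGL(V)$. Part 2) is a special case. The map $\varphi_G$ is itself a section of $\pi$, because $\pi\circ\varphi_G=\mathrm{id}$. Choosing $s=\varphi_G$ gives $\bar\rho(g)=\overline{\rho_{A(G)}(\varphi_G(g))}=\overline{\rho(g)}$.

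To see that $\bar\rho$ is a group morphism, take $g,h\in G$. The elements $s(g)s(h)$ and $s(gh)$ both map to $gh$ under $\pi$, so $s(gh)=s(g)s(h)z$ with $z\in\ker\pi$. Applying $\rho_{A(G)}$ and using that $\rho_{A(G)}(z)$ is a scalar gives $\bar\rho(gh)=\bar\rho(g)\bar\rho(h)$. We also have $\bar\rho(1)=\overline{\rho(1)}$, which is the identity class because $\bar\rho$ is multiplicative.

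For irreducibility, suppose $W$ is a subspace whose projectivization is stable under $\bar\rho(G)$. Then $\rho_{A(G)}(s(g))W=W$ for every $g$. The set $s(G)\ker\pi$ is all of $A(G)$, since any $a\in A(G)$ equals $s(\pi(a))\cdot z$ with $z\in\ker\pi$. Elements of $\ker\pi$ act by scalars and so preserve $W$. It follows that $W$ is $A(G)$-stable. By irreducibility of $\rho_{A(G)}$, $W$ is $0$ or $V$.

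The only nonformal ingredient is that kernel elements act by scalars. This needs centrality of $\ker\pi$, which is already established, and Schur's lemma for the possibly infinite group $A(G)$. Schur's lemma still holds here because $V$ is finite dimensional over $\mathbb{C}$: an operator commuting with the whole image has an eigenvalue, and its eigenspace is a nonzero subrepresentation, hence equal to $V$.
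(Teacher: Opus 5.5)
Your proof is correct and follows the paper's own argument: the centrality of $\ker\pi$ together with Schur's lemma makes kernel elements act by scalars, which gives independence of the section, multiplicativity and irreducibility, and part 2) comes from choosing a section through $\varphi_G$. The paper only asks for $s(g)=\varphi_G(g)$ at the given $g$, whereas you take $s=\varphi_G$ globally, which works equally well. Beyond that, you are spelling out details the paper leaves implicit.
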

\begin{proof}
$1)$ follows from the fact that the kerenel of $\pi$ lies in the center of $A(G)$ and hence acts by scalars in the irreducible representation $V$. To obtain $2)$ for a given $g$, we can take $s$ such that $s(g)=\varphi_G(g)$.
\end{proof}
Let $\chi$ be a character of $Conj(G)$. We define the product $\chi \cdot \rho$ of $\rho$  by $\chi$, by the following : 
$$ \chi \cdot \rho (g)=\chi(g) \rho(g),$$
for $g \in G$. The quandle representation $\chi \cdot \rho$ is irreducible. Moreover it follows from $2)$ of the previous proposition that $\overline{\chi \cdot \rho}=\bar{\rho}$.
\begin{proposition}\label{pro1}
\item[1)] If $\rho':G\to Conj(GL(V))$ is an irreducible quandle representation such that $\bar{\rho}'=\bar{\rho}$ then $\rho'=\chi \cdot \rho$ for some character $\chi$ of $Conj(G)$.
\item[2)] Assume that $\bar{\rho}$ lifts to a linear group represenation $\tilde{\rho}:G\to GL(V)$, then $\tilde{\rho}$ is irreducible and $$\rho=\chi \cdot \tilde{\rho},$$ for some character $\chi$ of $Conj(G)$.
\end{proposition}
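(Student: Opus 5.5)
For 1), I would use Proposition \ref{pro} 2). For every $g\in G$, the classes of $\rho(g)$ and $\rho'(g)$ in $PGL(V)$ coincide, because $\overline{\rho'(g)}=\bar{\rho}'(g)=\bar{\rho}(g)=\overline{\rho(g)}$. So for each $g$ there is a unique scalar $\chi(g)\in\mathbb{C}^\times$ with $\rho'(g)=\chi(g)\rho(g)$. This defines a map $\chi:G\to\mathbb{C}^\times$ with $\rho'=\chi\cdot\rho$, and the remaining task is to check that $\chi$ is a character of $Conj(G)$. Since both $\rho$ and $\rho'$ are quandle morphisms, for $h,g\in G$ we have
$$\chi(hgh^{-1})\rho(hgh^{-1})=\rho'(hgh^{-1})=\rho'(h)\rho'(g)\rho'(h)^{-1}=\chi(h)\chi(g)\chi(h)^{-1}\rho(h)\rho(g)\rho(h)^{-1}=\chi(g)\rho(hgh^{-1}).$$
Here the scalars $\chi(h)$ and $\chi(h)^{-1}$ cancel. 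Because $\rho(hgh^{-1})$ is invertible, and in particular nonzero, this gives $\chi(hgh^{-1})=\chi(g)$, so $\chi$ is a character.

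For 2), let $\tilde{\rho}:G\to GL(V)$ be a linear lift of $\bar{\rho}$. I would first show that $\tilde{\rho}$ is irreducible. A subspace $W$ stable under $\tilde{\rho}(G)$ is stable under every $\rho(g)$, because $\rho(g)$ is a scalar multiple of $\tilde{\rho}(g)$, these two having the same class $\bar{\rho}(g)=\overline{\rho(g)}$. So $W$ is a subrepresentation of the irreducible quandle representation $\rho$, and hence $W=0$ or $W=V$.

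Next, $\tilde{\rho}$ is a group morphism and therefore a quandle representation of $Conj(G)$. It is irreducible, as just shown. Its associated projective representation is $\overline{\tilde{\rho}}$, which by Proposition \ref{pro} 2) applied to $\tilde{\rho}$ equals $g\mapsto\overline{\tilde{\rho}(g)}=\bar{\rho}(g)$. Applying part 1) with $\tilde{\rho}$ in the role of $\rho$ and $\rho$ in the role of $\rho'$ then gives $\rho=\chi\cdot\tilde{\rho}$ for some character $\chi$.

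The only delicate point is justifying that each of the two representations has the same associated projective class as the other. This reduces to Proposition \ref{pro} 2), which holds for any irreducible quandle representation of $Conj(G)$. So I expect no real obstacle beyond bookkeeping the scalars.
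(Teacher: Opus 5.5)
Your proposal is correct and follows the paper's proof: part 1) uses the same scalar-cancellation computation, and part 2) applies part 1) with $\tilde{\rho}$ in the role of $\rho$. Your direct check that a $\tilde{\rho}(G)$-stable subspace is $\rho(g)$-stable for every $g$ is a more explicit version of the paper's one-line remark that $\tilde{\rho}$ is irreducible because $\bar{\rho}$ is.
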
 
\begin{proof}
We prove $1)$. If $\bar{\rho}'=\bar{\rho}$ then for $g,h \in G$, we have :
$$ \rho'(g)=\chi(g)\rho(g), \quad \rho'(h)=\chi(h) \rho(h) \quad\text{ and } \rho'(g\triangleright h)=\chi(g\triangleright h) \rho(g\triangleright h),$$
for some $\chi(g),\chi(h),\chi(g\triangleright h)\in \mathbb{C}^\times$. Now :
$$\rho'(g\triangleright h)=\rho'(g)\rho'(h)\rho'(g)^{-1}=\chi(h)\rho(g)\rho(h)\rho(g)^{-1}=\chi(h) \rho(g\triangleright h).$$
This proves that $\chi(x\triangleright y)\rho(x\triangleright y)=\chi(y)\rho(x\triangleright y)$ and $1)$ follows.\\We prove $2)$. The representation $\tilde{\rho}$ is irreducible since $\bar{\rho}$ is irreducible. It is clear that $\bar{\tilde{\rho}}=\bar{\rho}$. Hence by $1)$ of this proposition $\rho=\chi \cdot \tilde{\rho}$ for some character of $Conj(G)$.
\end{proof}
\begin{definition}
A $2$-cocycle $\alpha :G\times G\to \mathbb{C}^\times$ is symmetric if :$$\alpha(g,h)=\alpha(h,g),$$ for all $g,h\in G$.
\end{definition}
\begin{lemma}
A $2$-cocycle $\alpha: G\times G\to \mathbb{C}^\times$ satisfies the eqaution :
$$\alpha(g,hg^{-1})\alpha(h,g^{-1})\alpha(1,1)^{-1}\alpha(g,g^{-1})^{-1}=1.$$
if and only if it is symmetric.
\end{lemma}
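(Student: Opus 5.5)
The plan is to reduce the displayed equation, using only the $2$-cocycle identity, to a statement comparing $\alpha(x,y)$ with $\alpha(y,x)$. Throughout, the equation is read as holding for all $g,h\in G$. With the convention $\tilde{\rho}(gh)=\alpha(g,h)\tilde{\rho}(g)\tilde{\rho}(h)$, associativity gives the cocycle identity
$$\alpha(xy,z)\alpha(x,y)=\alpha(x,yz)\alpha(y,z)\qquad (x,y,z\in G).$$

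\textbf{Step 1: normalisation facts.} Putting $(x,y,z)=(h,1,1)$ gives $\alpha(h,1)=\alpha(1,1)$. Putting $(x,y,z)=(1,1,h)$ gives $\alpha(1,h)=\alpha(1,1)$.

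\textbf{Step 2: rewrite the equation.} Apply the identity to $(g,h,g^{-1})$:
$$\alpha(g,hg^{-1})\alpha(h,g^{-1})=\alpha(gh,g^{-1})\alpha(g,h).$$
So the equation of the lemma is equivalent to
$$(\ast)\qquad \alpha(gh,g^{-1})\alpha(g,h)=\alpha(1,1)\alpha(g,g^{-1}).$$
Next apply the identity to $(g^{-1},g,h)$ and use Step 1:
$$(\ast\ast)\qquad \alpha(g^{-1},gh)\alpha(g,h)=\alpha(1,1)\alpha(g^{-1},g).$$
This holds for every cocycle. Dividing $(\ast)$ by $(\ast\ast)$ shows that the equation of the lemma is equivalent to
$$\frac{\alpha(gh,g^{-1})}{\alpha(g^{-1},gh)}=\frac{\alpha(g,g^{-1})}{\alpha(g^{-1},g)}\quad\text{for all } g,h.$$
Substitute $x=gh$ and $y=g^{-1}$. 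As $g,h$ range over $G$, the pair $(x,y)$ ranges over all of $G\times G$. The condition becomes
$$\frac{\alpha(x,y)}{\alpha(y,x)}=\frac{\alpha(y^{-1},y)}{\alpha(y,y^{-1})}\quad\text{for all } x,y\in G.$$

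\textbf{Step 3: conclude both directions.} If $\alpha$ is symmetric, both sides are $1$, so the equation holds.

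Conversely, assume the reformulated condition and take $x=1$. Step 1 makes the left side $\alpha(1,y)/\alpha(y,1)=1$. Hence $\alpha(y^{-1},y)=\alpha(y,y^{-1})$ for every $y$. The right side is then $1$ for every pair, so $\alpha(x,y)=\alpha(y,x)$, i.e. $\alpha$ is symmetric.

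The main obstacle is Step 2: choosing the two triples $(g,h,g^{-1})$ and $(g^{-1},g,h)$ so that the resulting identities share the factor $\alpha(g,h)$ and the quotient isolates a commutator-type ratio $\alpha(x,y)/\alpha(y,x)$. Once that reformulation is in hand, the specialisation $x=1$ in Step 3 is immediate.
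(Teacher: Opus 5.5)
Your proof is correct and follows essentially the paper's route: both are direct manipulations of the cocycle identity together with the normalisation $\alpha(h,1)=\alpha(1,1)$. The paper substitutes $h\mapsto hg$ and uses only the instance $(h,g,g^{-1})$, whereas you use the instances $(g,h,g^{-1})$ and $(g^{-1},g,h)$ and then reparametrise by $(x,y)=(gh,g^{-1})$.

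One small remark: the factor $\alpha(y^{-1},y)/\alpha(y,y^{-1})$ equals $1$ for every cocycle (apply the identity to $(y,y^{-1},y)$ and use Step 1). So your reformulation is already pointwise equivalent to $\alpha(x,y)=\alpha(y,x)$, as in the paper, and the specialisation $x=1$ is valid but unnecessary.
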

\begin{proof}
By replacing $h$ by $hg$ in the equation, we get the equivalent equation
$$\alpha(g,h)\alpha(hg,g^{-1})\alpha(1,1)^{-1}\alpha(g,g^{-1})^{-1}=1.$$
By the cocycle condition : 
$$\alpha(hg,g^{-1}) \alpha(h,g)\alpha(h,1)^{-1}\alpha(g,g^{-1})^{-1}=1.$$
Combining both equation we get the equivalent equation to the one in the lemma:
$$ \alpha(g,h)\alpha(1,1)^{-1}\alpha(h,g)^{-1}\alpha(h,1)=1.$$
But by the cocycle condition, $\alpha(h,1)\alpha(1,1)\alpha(h,1)^{-1}\alpha(h,1)^{-1}=1$ and hence $\alpha(h,1)=\alpha(1,1)$. This simplifies the last equation to :
$$ \alpha(g,h)=\alpha(h,g). $$
We have proved the lemma.
\end{proof}
\begin{lemma}\label{lem}
Let $\theta : G \to PGL(V)$ be an irreducible projective representation of $G$. There is an irreducible quandle representation $\rho:G\to Conj(GL(V)$ inducing $\theta$ (i.e. $\bar{\rho}=\theta$) if and only if the class of $\theta$ in $H^2(G,\mathbb{C}^\times)$ admits a symmetric representative. 
\end{lemma}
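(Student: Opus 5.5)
Both directions work with a lift $\tilde{\theta}:G\to GL(V)$ of $\theta$ with $\tilde{\theta}(1)=Id$ and its cocycle $\alpha$, defined by $\tilde{\theta}(gh)=\alpha(g,h)\tilde{\theta}(g)\tilde{\theta}(h)$. Any quandle representation inducing $\theta$ has the form $\rho(g)=\lambda(g)\tilde{\theta}(g)$ for scalars $\lambda(g)\in\mathbb{C}^\times$, because $\overline{\rho(g)}=\bar{\rho}(g)=\theta(g)$ by $2)$ of Proposition \ref{pro}. Conversely, such a $\rho$ is automatically irreducible once it is a quandle morphism, since its image spans the same projective image as the irreducible $\theta$. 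So the lemma reduces to deciding when there is a function $\lambda:G\to\mathbb{C}^\times$ making $g\mapsto \lambda(g)\tilde{\theta}(g)$ a quandle morphism.

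The next step is to express the quandle condition through $\alpha$. From the cocycle relation,
$$\tilde{\theta}(ghg^{-1})=\alpha(g,hg^{-1})\alpha(h,g^{-1})\,\tilde{\theta}(g)\tilde{\theta}(h)\tilde{\theta}(g^{-1}),$$
and from $\tilde{\theta}(1)=\alpha(g,g^{-1})\tilde{\theta}(g)\tilde{\theta}(g^{-1})$ together with $\tilde{\theta}(1)=Id$,
$$\tilde{\theta}(g^{-1})=\alpha(g,g^{-1})^{-1}\tilde{\theta}(g)^{-1}.$$
I would also keep track of the normalization $\alpha(1,1)=1$, which follows from $\tilde{\theta}(1)=Id$. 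This gives
$$\tilde{\theta}(g\triangleright h)=\beta(g,h)\,\tilde{\theta}(g)\tilde{\theta}(h)\tilde{\theta}(g)^{-1},\qquad \beta(g,h)=\alpha(g,hg^{-1})\alpha(h,g^{-1})\alpha(1,1)^{-1}\alpha(g,g^{-1})^{-1},$$
which is exactly the expression in the preceding lemma. Consequently $\rho=\lambda\tilde{\theta}$ is a quandle morphism if and only if
$$\lambda(ghg^{-1})\,\beta(g,h)=\lambda(h)\quad\text{for all } g,h\in G.$$

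For the direction ($\Leftarrow$), suppose the class of $\theta$ has a symmetric representative. Changing the lift $\tilde{\theta}$ by scalars changes $\alpha$ by the corresponding coboundary, and keeping $\tilde{\theta}(1)=Id$ keeps the normalization. So we may choose $\tilde{\theta}$ whose cocycle $\alpha$ is itself symmetric. The preceding lemma then gives $\beta\equiv 1$, and we can take $\lambda\equiv1$, i.e. $\rho=\tilde{\theta}$.

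For the direction ($\Rightarrow$), given $\rho$ inducing $\theta$, I would take $\rho$ itself as the lift of $\theta$ after normalizing. Since $1\triangleright g=g$ and $g\triangleright 1=1$, the element $\rho(1)$ commutes with the image of $\rho$, so by Schur's lemma (irreducibility) it is a scalar $c$. Set $\tilde{\theta}:=c^{-1}\rho$. This is still a quandle morphism, because conjugation is unchanged by scalars and the condition is homogeneous, and now $\tilde{\theta}(1)=Id$. For this lift, $\beta\equiv1$, so by the preceding lemma its cocycle $\alpha$ is symmetric; and $\alpha$ represents the class of $\theta$. The main obstacle I expect is only this careful bookkeeping: getting the normalization $\tilde{\theta}(1)=Id$ so that the class is represented by $\alpha$ and the formula for $\beta$ matches the lemma exactly.
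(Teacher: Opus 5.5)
Your proposal is correct and follows essentially the paper's route: you rewrite the quandle condition for a lift of $\theta$ through its cocycle, obtain exactly the factor $\alpha(g,hg^{-1})\alpha(h,g^{-1})\alpha(1,1)^{-1}\alpha(g,g^{-1})^{-1}$ of the preceding lemma, and read off both directions from it. One point needs to be made explicit in the ($\Leftarrow$) direction: the given symmetric representative need not satisfy $\alpha(1,1)=1$, so before realizing it by a lift with $\tilde{\theta}(1)=\mathrm{Id}$ you must divide it by the constant $\alpha(1,1)$, which is a coboundary and preserves symmetry; the paper avoids this (and also your Schur step, since $\rho(1)$ is already scalar because $\theta(1)$ is trivial) by allowing arbitrary lifts $\gamma$, for which $\gamma(1)=\alpha(1,1)^{-1}\mathrm{id}$ automatically and the same formula holds.
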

\begin{proof}
Let $\theta$ be a an irreducible projective represenation with class $[\theta]\in H^2(G,\mathbb{C}^\times$. Now any lift (as a map) $\gamma : G \to GL(V)$ of $\theta$ satisfies :
$$\gamma(gh)=\alpha(g,h)\gamma(g)\gamma(h),$$
for $\alpha : G\times G\to \mathbb{C}^\times$ a $2$-cocycle representing $[\theta]$ and any $2$-cocycle representing $[\theta]$ is associated to some lift. A lift $\gamma$ is a quandle representation if and only if $\gamma(ghg^{-1})=\gamma(g)\gamma(h)\gamma(g)^{-1}$. Now in term of the cocycle : 
$$\gamma(ghg^{-1})=\alpha(g,hg^{-1})\alpha(h,g^{-1})\gamma(g)\gamma(h)\gamma(g^{-1}),$$
$$\gamma(1)=\alpha(g,g^{-1})\gamma(g)\gamma(g^{-1}),$$
$$\gamma(1)=\alpha(1,1)^{-1}\mathrm{id}.$$
Combining these equations we get :
$$\gamma(ghg^{-1})=\alpha(g,hg^{-1})\alpha(h,g^{-1})\alpha(g,g^{-1})^{-1}\alpha(1,1)^{-1}\gamma(g)\gamma(h)\gamma(g)^{-1}.$$
This proves that a lift $\gamma$ is a quandle representation if and only if :
$$\alpha(g,hg^{-1})\alpha(h,g^{-1})\alpha(g,g^{-1})^{-1}\alpha(1,1)^{-1}=1,$$
wich is equivalent to $\alpha$ symmetric by the previous lemma. What we have seen proves that $\theta$ lifts to a quandle representation if and only if the class $[\theta]$ admits a symmetric representative. It is not hard to check that the lift is irreducible.
\end{proof}
\begin{remark}
If $\rho :G \to Conj(GL(V)$ is an irreducible quandle representation then for any character $\chi$ of $Conj(G)$ the quandle representation $\chi \cdot \rho$ is irreducible. 
\end{remark}
\begin{theorem}
The irreducible quandle representations of $G$ over $\mathbb{C}$ are the representations of the form $\chi \cdot \rho$ for $\chi$ a character of $Conj(G)$ and $\rho$ an irreducible linear group representation of $G$, if and only if any symmetric $2$-cocycle of $G$ with values in $\mathbb{C}^\times$ is a coboundary.
\end{theorem}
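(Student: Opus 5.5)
We prove the two implications separately, with Lemma \ref{lem} and Proposition \ref{pro1} as the main tools.

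\emph{Sufficiency.} Assume every symmetric $2$-cocycle of $G$ with values in $\mathbb{C}^\times$ is a coboundary, and let $\rho: G\to Conj(GL(V))$ be an irreducible quandle representation. By Proposition \ref{pro}, $\bar{\rho}$ is an irreducible projective representation of $G$. Applying Lemma \ref{lem} to $\theta=\bar{\rho}$, with $\rho$ itself as the inducing quandle representation, the class $[\bar{\rho}]\in H^2(G,\mathbb{C}^\times)$ has a symmetric representative. By hypothesis that representative is a coboundary, so $[\bar{\rho}]$ is trivial. Hence $\bar{\rho}$ lifts to a linear group representation $\tilde{\rho}:G\to GL(V)$. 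By 2) of Proposition \ref{pro1}, $\tilde{\rho}$ is irreducible and $\rho=\chi\cdot\tilde{\rho}$ for some character $\chi$ of $Conj(G)$. Conversely, every $\chi\cdot\rho$ with $\rho$ an irreducible linear representation is an irreducible quandle representation, by the Remark. So the two families coincide.

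\emph{Necessity.} Argue by contraposition. Suppose $\alpha$ is a symmetric $2$-cocycle that is not a coboundary. By the result quoted from \cite{Sc}, there is an irreducible projective representation $\theta:G\to PGL(V)$ with class $[\alpha]\neq 0$. Its class has the symmetric representative $\alpha$, so Lemma \ref{lem} gives an irreducible quandle representation $\rho$ with $\bar{\rho}=\theta$. We claim $\rho$ is not of the form $\chi\cdot\rho_0$ with $\rho_0$ a linear group representation. Indeed, if $\rho=\chi\cdot\rho_0$, then by 2) of Proposition \ref{pro} we get $\bar{\rho}(g)=\overline{\rho(g)}=\overline{\rho_0(g)}$ for all $g$. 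Thus $\theta$ would lift to the linear representation $\rho_0$, and its class would be trivial by the lifting criterion, a contradiction.

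\emph{Main obstacle.} The delicate point is the identification of $\bar{\rho}$ with the projective representation $g\mapsto\overline{\rho(g)}$, which is needed in both directions. This is supplied by Proposition \ref{pro}. We must also note that the lift produced in Lemma \ref{lem} lives on the same space $V$ and is irreducible; the lemma states both. Everything else is direct bookkeeping with the cited results.
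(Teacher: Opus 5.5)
Your proof is correct and takes the same route as the paper. Sufficiency goes through Lemma \ref{lem} and 2) of Proposition \ref{pro1}. Necessity comes from realizing a non-trivial symmetric class as an irreducible quandle representation whose induced projective representation cannot lift. You are somewhat more explicit than the paper, which leaves two steps implicit: the appeal to the existence of an irreducible projective representation with a prescribed class, and the check that every $\chi\cdot\rho$ really is an irreducible quandle representation.
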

\begin{proof}
For $\chi \cdot \rho$ as in the theorem $\overline{\chi\cdot \rho}=\bar{\rho}$. Hence, if the irreducible representations of $G$ are as in the theoreim, the class of the projective representation induced by an irreducible quandle representation is zero. It follows from the previous lemma, that in this case, every symetric $2$-cocycle has a trivial class in $H^2(G,\mathbb{C}^\times)$ and hence is a coboundary. Now assume that every symmetric $2$-cocyle is a coboundary. It follows from the previous lemma that the only irreducible projective representation that lift to an irreducible quandle representation in fact lift to an irreducible linear group representation of $G$. It follows from $2)$ of \ref{pro1} that any irreducible quandle representation of $G$ is of the form $\chi \cdot \rho$ as in the theorem.
\end{proof}
\begin{remark}
Considering the determinant one shows that the representations $\chi \cdot \rho$ of the proposition take values in the unitary group if and only if $\rho$ is unitary and $\chi$ takes values in complex numbers with modulus $1$.
\end{remark}
\begin{definition}
For $G$ a finite group we define $B_\mathbb{C}(G)$ as the subgroup of $H^2(G,\mathbb{C}^\times)$ consisting of $2$-cocycle classes that are trivial after restriction to any abelian subgroup of $G$. 
\end{definition}
The following result is known. We provide a proof for completness.
\begin{proposition}
If $H$ is abelian and $\alpha :H\times H \to \mathbb{C}^\times$ is a symmetric $2$-cocycle, then $\alpha$ is a coboundary.
\end{proposition}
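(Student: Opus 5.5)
The plan is to reduce to the standard description of $2$-cocycles on a finite abelian group via the associated commutator (alternating) bicharacter. Given a $2$-cocycle $\alpha$ on $H$, set
$$\beta_\alpha(g,h)=\alpha(g,h)\alpha(h,g)^{-1}, \qquad g,h\in H.$$
First I check that $\beta_\alpha$ depends only on the class of $\alpha$ in $H^2(H,\mathbb{C}^\times)$. If $\alpha'(g,h)=\alpha(g,h)\,f(gh)f(g)^{-1}f(h)^{-1}$, the coboundary factor is symmetric in $g,h$ because $H$ is abelian ($gh=hg$). So it cancels in $\beta$. If $\alpha$ is symmetric, then $\beta_\alpha\equiv 1$. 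It therefore suffices to show that a class with trivial commutator form is trivial.

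For this I would use the projective-representation picture from the reminders. Take an irreducible projective representation $\theta$ with class $[\alpha]$; one exists by the cited result of \cite{Sc}. Choose a lift $\gamma$ with
$$\gamma(gh)=\alpha(g,h)\gamma(g)\gamma(h).$$
Since $gh=hg$, we get $\alpha(g,h)\gamma(g)\gamma(h)=\alpha(h,g)\gamma(h)\gamma(g)$. Hence $\gamma(g)\gamma(h)=\beta_\alpha(g,h)^{-1}\gamma(h)\gamma(g)$. If $\alpha$ is symmetric, all the $\gamma(g)$ commute pairwise.

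The key step is then a Schur-lemma argument. The commuting family $\{\gamma(g)\}$ on the finite-dimensional space $V$ has a common eigenvector $v$. The line $\mathbb{C}v$ is stable under all $\gamma(g)$, hence is a projective subspace stable under $\theta$. Irreducibility forces $\dim V=1$. Thus $\gamma:H\to\mathbb{C}^\times$ satisfies $\alpha(g,h)=\gamma(gh)\gamma(g)^{-1}\gamma(h)^{-1}$, so $\alpha$ is a coboundary. Since $H$ is finite, finite-dimensionality is automatic here; alternatively one can invoke Proposition~\ref{pro}-style finiteness, but it is not needed.

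The only delicate point I expect is making sure the chosen projective representation has class exactly $[\alpha]$ and that the lift realizes $\alpha$ itself rather than a cohomologous cocycle. The latter is harmless, because being a coboundary is a class property and symmetry was only used through $\beta_\alpha$, which is class-invariant. So I would phrase the argument entirely in terms of $\beta$, applying it to whatever cocycle the lift produces.

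As a fallback avoiding the existence theorem, there is a purely algebraic route. A symmetric cocycle defines an abelian central extension $1\to\mathbb{C}^\times\to E\to H\to 1$. Since $\mathbb{C}^\times$ is divisible, hence injective as an abelian group, this extension splits, which again gives that $\alpha$ is a coboundary.
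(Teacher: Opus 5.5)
Your proof is correct and follows essentially the paper's route. Like the paper, you take an irreducible projective representation in the class $[\alpha]$ (via Schur's existence result) and force it to be one-dimensional. Your computation that a lift with symmetric cocycle has pairwise commuting values, followed by a common eigenvector, is just the unwound form of the paper's appeal to Lemma~\ref{lem} (such a lift is a quandle representation of the trivial quandle $Conj(H)$) together with the commutativity of $A(H)$. Your fallback, splitting the abelian extension by the divisible group $\mathbb{C}^\times$, is also valid and is a genuinely independent argument that needs neither the existence theorem nor the finiteness of $H$.
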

\begin{proof}
For $H$ abelian the envelopping group $A(H)$ is abelian. Hence, the irreducible representations of $Conj(H)$ are $1$-dimensional. Therefore, by the last lemma any symmetric $2$-cocycle is associated to a projective representation $\rho :H \to PGL(V)$ with $V$ one dimensional. Taking any lift $\tilde{\rho}$ (as a map) of $\rho$ to $GL(V)$, we get :
$$ \tilde{\rho}(gh)=\alpha(g,h) \tilde{\rho}(g)\tilde{\rho(h)},$$
were $\alpha$ is a representative of the class of $\rho$ in $H^2(H,\mathbb{C}^\times)$. Applying an isomorphism $GL(V)\to \mathbb{C}^\times$ to the equation, we get that $\alpha$ is a coboundary.  
\end{proof}
\begin{corollary}
For $G$ a finite group, the class of a symmetric $2$-cocyle of $G$ with values in $\mathbb{C}^\times$ in $H^2(G,\mathbb{C}^\times)$ lies in $B_\mathbb{C}(G)$.
\end{corollary}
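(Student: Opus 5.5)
The plan is to restrict a symmetric cocycle to each abelian subgroup and apply the preceding proposition there. Let $\alpha : G\times G\to \mathbb{C}^\times$ be a symmetric $2$-cocycle and $H\subseteq G$ an abelian subgroup. By the definition of $B_\mathbb{C}(G)$, we must show that the image of $[\alpha]$ under the restriction map $H^2(G,\mathbb{C}^\times)\to H^2(H,\mathbb{C}^\times)$ is trivial.

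First, observe that restriction on cohomology is induced at the level of cochains by plain restriction of functions: the class $\mathrm{res}^G_H[\alpha]$ is represented by $\alpha|_{H\times H}$. Next, check that $\alpha|_{H\times H}$ is a $2$-cocycle on $H$. This holds because the cocycle identity for $\alpha$ holds for all triples in $G$, in particular for all triples in $H$.

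Moreover, $\alpha|_{H\times H}$ is symmetric: for $g,h\in H$ we have $\alpha|_{H\times H}(g,h)=\alpha(g,h)=\alpha(h,g)=\alpha|_{H\times H}(h,g)$. Since $H$ is abelian, the preceding proposition applies and shows that $\alpha|_{H\times H}$ is a coboundary on $H$. Hence $\mathrm{res}^G_H[\alpha]=0$.

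Since $H$ was an arbitrary abelian subgroup, $[\alpha]$ lies in $B_\mathbb{C}(G)$. There is no real obstacle. The only point to state carefully is that the restriction homomorphism is indeed given by restricting representative cocycles, which is standard; so the class is independent of the chosen representative, and the conclusion does not depend on it.
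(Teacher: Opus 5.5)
Your proof is correct and is exactly the paper's argument: the corollary is stated there without separate proof, as an immediate consequence of the preceding proposition that symmetric $2$-cocycles on abelian groups are coboundaries, applied to the restriction $\alpha|_{H\times H}$ to each abelian subgroup $H$. Your explicit checks are the details the paper leaves implicit: restriction is computed on representative cocycles, and the restricted cocycle is still a symmetric cocycle.
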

Combining the corollary with the last theorem, we get : 
\begin{proposition}\label{fin}
If $G$ is a finite group with $B_\mathbb{C}(G)=0$, then the irreducible quandle representation of $G$ are the representations of the form $\chi\cdot \rho$ where $\chi$ is a character of $Conj(G)$ and $\rho$ is an irreducible group represenation of $G$. 
\end{proposition}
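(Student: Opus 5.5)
The plan is to derive Proposition \ref{fin} directly from the last theorem and the corollary. By the theorem, it suffices to show that when $B_\mathbb{C}(G)=0$, every symmetric $2$-cocycle $\alpha: G\times G\to \mathbb{C}^\times$ is a coboundary. Once that is established, the theorem gives the description of the irreducible quandle representations as the products $\chi\cdot\rho$.

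First, let $\alpha$ be a symmetric $2$-cocycle of $G$. For any abelian subgroup $H\subset G$, the restriction $\alpha|_{H\times H}$ is still a $2$-cocycle, since the cocycle identity is inherited. It is also still symmetric, since the identity $\alpha(g,h)=\alpha(h,g)$ restricts as well. By the preceding proposition on abelian groups, $\alpha|_{H\times H}$ is a coboundary on $H$. Hence the class $[\alpha]\in H^2(G,\mathbb{C}^\times)$ restricts trivially to every abelian subgroup; this is exactly the content of the corollary, so $[\alpha]\in B_\mathbb{C}(G)$.

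Next, the hypothesis $B_\mathbb{C}(G)=0$ forces $[\alpha]=0$, so $\alpha$ is a coboundary. The condition of the last theorem therefore holds, and its "if" direction gives the conclusion. Concretely, given an irreducible quandle representation $\rho$, Lemma \ref{lem} shows that the class of $\bar\rho$ has a symmetric representative, which is therefore trivial. Hence $\bar\rho$ lifts to a linear representation $\tilde\rho$, and $2)$ of Proposition \ref{pro1} gives $\rho=\chi\cdot\tilde\rho$ with $\tilde\rho$ irreducible.

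There is no real obstacle, since everything is assembled from earlier results. The only point to check is that restriction preserves both the cocycle and the symmetry conditions, which is immediate from the definitions.
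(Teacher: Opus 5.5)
Your proposal is correct and follows the paper's route: the paper obtains this proposition by combining the corollary with the theorem. The corollary says that every symmetric $2$-cocycle class lies in $B_\mathbb{C}(G)$, via restriction to abelian subgroups and the proposition on abelian groups. The theorem's ``if'' direction then gives the description of the irreducible quandle representations as the products $\chi\cdot\rho$.
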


\begin{definition}
For $G$ a finite group, the Bogomolov multiplier $B_0(G)$ is the subgroup of $H^2(G,\mathbb{Q}/\mathbb{Z})$ consisting of $2$-cocycle classes that are trivial after restriction to any abelian subgroup of $G$. 
\end{definition}
Let $\phi : \mathbb{Q}/\mathbb{Z}\to \mathbb{C}^*$ be the injective group morphism induced by $x\mapsto e^{2i\pi x}$ for $x\in \mathbb{Q}$. To a $2$-cocycle $\alpha : G \times G \to \mathbb{Q}/\mathbb{Z}$ one can assign the $2$-cocycle with values in $\mathbb{C}^\times$ defined by $\phi \circ \alpha$. This induces a morphism $\phi_*^G : H^2(G, \mathbb{Q}/\mathbb{Z})\to H^2(G,\mathbb{C}^\times)$. 
\begin{proposition}
The morphism $\phi_*^G : H^2(G, \mathbb{Q}/\mathbb{Z})\to H^2(G,\mathbb{C}^\times)$ is an isomorphism.
\end{proposition}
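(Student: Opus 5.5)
Since $G$ is finite, the plan is to use the short exact sequence of coefficient groups
$$0\to \mathbb{Q}/\mathbb{Z}\xrightarrow{\phi}\mathbb{C}^\times \to \mathbb{C}^\times/\phi(\mathbb{Q}/\mathbb{Z})\to 0$$
and the long exact cohomology sequence it induces. The quotient $D:=\mathbb{C}^\times/\mu_\infty$ is divisible, since $\mathbb{C}^\times$ is divisible. It is also uniquely divisible: if $z^n\in\mu_\infty$ then $z$ is itself a root of unity, so $D$ is torsion free. Hence $D$ is a $\mathbb{Q}$-vector space, and $n\cdot$ acts bijectively on it for every $n$.

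The key step is that $H^k(G,D)=0$ for $k\geq 1$. The standard argument goes as follows. Multiplication by $|G|$ on $D$ is an automorphism of the $G$-module $D$ (trivial action), so it induces an automorphism of $H^k(G,D)$. On the other hand, multiplication by $|G|$ annihilates $H^k(G,M)$ for every module $M$ and every $k\geq1$; this follows from restriction to the trivial subgroup followed by corestriction, or from the explicit averaging formula on cochains. Therefore $H^k(G,D)=0$ for $k\ge 1$.

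From the long exact sequence
$$H^1(G,D)\to H^2(G,\mathbb{Q}/\mathbb{Z})\xrightarrow{\phi_*^G} H^2(G,\mathbb{C}^\times)\to H^2(G,D),$$
both outer terms vanish, so $\phi_*^G$ is an isomorphism. One should check that the map in this sequence really is the map $\phi_*^G$ defined by composing cocycles with $\phi$. This holds because the long exact sequence is built exactly from the induced maps on cochains.

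The main obstacle is only the vanishing of $H^k(G,D)$, which rests on the fact that $|G|$ kills higher cohomology of a finite group. If one wants to avoid cohomological machinery, an alternative is to prove surjectivity and injectivity by hand:

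\textbf{Surjectivity.} Given a $\mathbb{C}^\times$-cocycle $\alpha$, let $n=|G|$. Then $\alpha^n=\delta f$ for some cochain $f$. Choose an $n$-th root $f^{1/n}$ and set $\alpha'=\alpha\cdot \delta(f^{1/n})^{-1}$. Then $\alpha'^n=1$, so $\alpha'$ takes values in $\mu_n\subset \phi(\mathbb{Q}/\mathbb{Z})$.

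\textbf{Injectivity.} Suppose $\phi\circ\beta=\delta f$ with $f:G\to\mathbb{C}^\times$. Then the image $\bar f$ of $f$ in $D$ is a $1$-cocycle, i.e.\ a homomorphism $G\to D$. Since $G$ is finite and $D$ is torsion free, $\bar f$ is trivial. So $f$ takes values in $\mu_\infty$, and $\beta$ is a coboundary in $\mathbb{Q}/\mathbb{Z}$.
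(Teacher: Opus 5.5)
Your proof is correct, and it takes a genuinely different route from the paper.

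The paper proves surjectivity representation-theoretically. It realizes a class $x\in H^2(G,\mathbb{C}^\times)$ by an irreducible projective representation $G\to PGL_n(\mathbb{C})$ (Schur's existence result), chooses a lift with values in $SL_n(\mathbb{C})$, and takes determinants to see that the cocycle satisfies $\alpha^n=1$. It then gets injectivity by counting. The universal coefficient theorem gives $H^2(G,\mathbb{C}^\times)\simeq \mathrm{Hom}(H_2(G,\mathbb{Z}),\mathbb{C}^\times)$ and $H^2(G,\mathbb{Q}/\mathbb{Z})\simeq \mathrm{Hom}(H_2(G,\mathbb{Z}),\mathbb{Q}/\mathbb{Z})$. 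Finiteness of $H_2(G,\mathbb{Z})$ makes these finite groups of the same order, so the surjection $\phi_*^G$ between them is bijective.

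You instead use the coefficient sequence $0\to\mathbb{Q}/\mathbb{Z}\to\mathbb{C}^\times\to D\to 0$ together with the vanishing of $H^k(G,D)$ for the uniquely divisible quotient $D$. This gives injectivity and surjectivity at once, and in fact the isomorphism in every degree $k\geq 1$. It needs neither projective representations, nor the universal coefficient theorem, nor finiteness of the Schur multiplier. The only input is that $|G|$ kills $H^k(G,-)$ for $k\geq 1$. Your hands-on version makes even that elementary, since $\alpha(g,h)^{|G|}=F(g)F(h)F(gh)^{-1}$ with $F(g)=\prod_{x\in G}\alpha(g,x)$.

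Your surjectivity step is essentially the paper's root-of-unity normalization. You obtain it by extracting $|G|$-th roots of a cochain rather than from determinants of $SL_n$-lifts. Your injectivity step is a direct argument: a homomorphism from a finite group to the torsion-free group $D$ is trivial. This replaces the paper's cardinality comparison, and it shows naturality directly rather than deducing bijectivity from an abstract isomorphism of finite groups.
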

\begin{proof}
We first prove that the morphism is surjective. The $2$-cocycles with values in $\phi(\mathbb{Q}/\mathbb{Z})$ are in the image of $\phi_*^G$. We will prove that any class in $H^2(G,\mathbb{C}^\times)$ in represented by a $2$-cocycle with values in $\phi(\mathbb{Q}/\mathbb{Z})$. To a class $x$ in $H^2(G,\mathbb{C}^\times)$ correspond an irreducible projective representation $\rho : G \to PGL_n(\mathbb{C})$. Let $\tilde{\rho}:G\to SL_n(\mathbb{C})$ be a map lifting $\rho$. We have :
$$\tilde{\rho}(gh)=\alpha(g,h)\tilde{\rho}(g)\tilde{\rho}(h)$$
with $\alpha$ a $2$-cocycle representing the class $x$. Since the lifts are taken in $SL_n(\mathbb{C})$, we get by taking the determinant that $\alpha(g,h)^n=1$. Hence, the $2$-cocyle $\alpha $ is with values in $\phi(\mathbb{Q}/\mathbb{Z})$ and represent $x$. This proves that $\phi_*^G$ is surjective. By the universal coefficient theorem, we get : 
$$ H^2(G,\mathbb{C}^\times)\simeq Hom(H_2(G,\mathbb{Z}),\mathbb{C}^\times)\quad \text{and} \quad  H^2(G, \mathbb{Q}/\mathbb{Z})\simeq Hom(H_2(G,\mathbb{Z}),\mathbb{Q}/\mathbb{Z}).$$
Since $H_2(G,\mathbb{Z})$ is finite : $$Hom(H_2(G,\mathbb{Z}),\mathbb{C}^\times)\simeq Hom(H_2(G,\mathbb{Z}), \phi(\mathbb{Q}/\mathbb{Z}))\simeq Hom(H_2(G,\mathbb{Z}),\mathbb{Q}/\mathbb{Z}),$$
and therefore the finite groups $H^2(G,\C^\times)$ and $H^2(G,\mathbb{Q}/\mathbb{Z})$ are isomorphic. This complete the proof. Indeed, we have that $\phi_*^G$ is a surjective morphism between two finite isomorphic groups.
\end{proof}
\begin{proposition}
The morphism $\phi_*^G$ restricts to an isomorphism beteween the Bogomolov multiplier $B_0(G)$ and the group $B_\mathbb{C}(G)$.
\end{proposition}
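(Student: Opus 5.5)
The plan is to use the naturality of $\phi_*$ with respect to restriction to subgroups. For any subgroup $H\subseteq G$, composing a cocycle with $\phi$ commutes with restricting it to $H\times H$. Hence the square formed by the restriction maps $\mathrm{res}^G_H$ on $H^2(-,\mathbb{Q}/\mathbb{Z})$ and on $H^2(-,\mathbb{C}^\times)$, together with $\phi_*^G$ and $\phi_*^H$, commutes:
$$\phi_*^H\circ \mathrm{res}^G_H=\mathrm{res}^G_H\circ \phi_*^G .$$
This is a direct check at the level of cocycles. It also passes to classes, because $\phi$ sends coboundaries to coboundaries.

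Next, I apply the previous proposition to every finite group involved, in particular to every abelian subgroup $H$ of $G$. This shows that each $\phi_*^H$ is an isomorphism, and in particular injective. Let $x\in H^2(G,\mathbb{Q}/\mathbb{Z})$. By commutativity of the square, $\mathrm{res}^G_H(\phi_*^G(x))=0$ if and only if $\phi_*^H(\mathrm{res}^G_H(x))=0$. By injectivity of $\phi_*^H$, this holds if and only if $\mathrm{res}^G_H(x)=0$. Letting $H$ range over all abelian subgroups, I conclude that $x\in B_0(G)$ if and only if $\phi_*^G(x)\in B_\mathbb{C}(G)$.

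Finally, I conclude as follows. Since $\phi_*^G$ is injective, its restriction to $B_0(G)$ is injective. The equivalence above shows that $\phi_*^G(B_0(G))\subseteq B_\mathbb{C}(G)$. For surjectivity, take $y\in B_\mathbb{C}(G)$. Because $\phi_*^G$ is surjective, $y=\phi_*^G(x)$ for some $x$, and the equivalence then gives $x\in B_0(G)$. So the restriction is a bijective group morphism $B_0(G)\to B_\mathbb{C}(G)$.

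The only point needing care, and the main obstacle, is the well-definedness and commutativity of the restriction square at the level of cohomology classes. This follows from $\phi$ being a group morphism, so that $\phi\circ\delta\beta=\delta(\phi\circ\beta)$ for any $1$-cochain $\beta$. The remaining steps are formal.
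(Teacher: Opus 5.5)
Your proof is correct and takes essentially the same approach as the paper, which also relies on the commutative square formed by the restriction maps together with the isomorphisms $\phi_*^G$ and $\phi_*^A$ for abelian subgroups $A$. Your diagram chase simply writes out the step the paper compresses into ``this follows from the definitions of both groups,'' using only bijectivity of $\phi_*^G$ and injectivity of $\phi_*^H$ on abelian subgroups, both supplied by the previous proposition.
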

\begin{proof}
This follows from the definitions of both groups and the fact that for an abelian subgroup $A$ of $G$ we have a commutative diagram : 
$$\begin{tikzcd}
H^2(G,\mathbb{C}^\times) \arrow{r} & H^2(A,\mathbb{C}^\times)  \\ 
 H^2(G, \mathbb{Q}/\mathbb{Z})\arrow{u}{\phi_*^G} \arrow{r}&   H^2(A, \mathbb{Q}/\mathbb{Z})\arrow{u}{\phi_*^A}
\end{tikzcd} .$$
where the vertical morphisms are isomorphisms and the horizontal morphisms are the restriction maps. The isomorphism betwee
\end{proof}
It follows from the last proposition and proposition \ref{fin} that :
\begin{proposition}
If $G$ is a finite group having trivial Bogomolov multiplier $($$B_0(G)=0$$)$, then the irreducible quandle representation of $G$ are the representations of the form $\chi\cdot \rho$ where $\chi$ is a character of $Conj(G)$ and $\rho$ is an irreducible group represenation of $G$. 
\end{proposition}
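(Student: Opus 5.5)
This is a direct combination of the two preceding results, and the plan is simply to chain them. First, the previous proposition says that $\phi_*^G$ restricts to an isomorphism $B_0(G)\simeq B_\mathbb{C}(G)$. Under the hypothesis $B_0(G)=0$, this gives $B_\mathbb{C}(G)=0$ at once.

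Second, I would apply Proposition \ref{fin} with $B_\mathbb{C}(G)=0$. It states exactly that every irreducible quandle representation of $Conj(G)$ over $\mathbb{C}$ has the form $\chi\cdot\rho$, with $\chi$ a character of $Conj(G)$ and $\rho$ an irreducible linear group representation of $G$.

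For completeness I would recall why Proposition \ref{fin} holds. The corollary shows that the class of any symmetric $2$-cocycle lies in $B_\mathbb{C}(G)$: its restriction to an abelian subgroup is again symmetric, hence a coboundary by the proposition on abelian groups. So $B_\mathbb{C}(G)=0$ forces every symmetric $2$-cocycle to be a coboundary, and the main theorem then yields the stated description.

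I do not expect a real obstacle, since every ingredient is already established. The only point worth checking is that restricting a symmetric cocycle to a subgroup keeps it symmetric, and that is immediate from the definition.
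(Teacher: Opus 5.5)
Your proposal is correct and matches the paper's argument: the paper also obtains the statement by combining the isomorphism $B_0(G)\simeq B_\mathbb{C}(G)$ induced by $\phi_*^G$ with Proposition \ref{fin}. Your recap of why Proposition \ref{fin} holds follows the paper's own chain of results: symmetric cocycles restrict to symmetric cocycles on abelian subgroups, hence to coboundaries, so their classes lie in $B_\mathbb{C}(G)$, and then the main theorem applies.
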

\begin{example}
Simple groups has trivial Bogomolov multiplier according to $($\cite{Bog}$)$.
\end{example}
\section{The envelopping group}
In this section $G$ is a \textbf{finite} group. We recall that we have denoted the envelopin group of $Conj(G)$ by $A(G)$. We have seen that we have morphism $\pi : A(G) \to G$ with kernel lying in the center of $A(G)$. The kernel of this map will be \textbf{denoted} by $Z_1$. The following is well known :
\begin{proposition}
\item[1)] The abelianisation of the envelopping group of a quandle $Q$ is the free abelian group $\mathbb{Z}Q/Inn(Q)$ on the set of orbits $Q/Inn(Q)$ of $Q$ with respect to the action of $Inn(Q)$
\item[2)] The abelianisation map $Ab : G(Q) \to \mathbb{Z}Q/Inn(Q)$ associate to a generator its class in $Q/Inn(Q)$;
\end{proposition}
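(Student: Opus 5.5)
The plan is to compare presentations directly. Abelianising the defining presentation of $G(Q)$ forces all generators in one $Inn(Q)$-orbit to become equal, and imposes no further relation. We build a group morphism $Ab : G(Q)\to \mathbb{Z}Q/Inn(Q)$ through the universal property, then show it is the abelianisation by exhibiting an inverse on the abelian side.

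\textbf{Step 1: the morphism.} Let $[x]$ denote the orbit of $x\in Q$ under $Inn(Q)$. View the free abelian group $\mathbb{Z}Q/Inn(Q)$ as a group. Since it is abelian, its conjugacy quandle is trivial: $a\triangleright b=b$. The map $f:Q\to Conj(\mathbb{Z}Q/Inn(Q))$, $x\mapsto [x]$, is a quandle morphism because $x\triangleright y=L_x(y)$ lies in the orbit of $y$, so $f(x\triangleright y)=[y]=f(x)\triangleright f(y)$. By the universal property of $\varphi_Q$ there is a unique group morphism $Ab:G(Q)\to \mathbb{Z}Q/Inn(Q)$ with $Ab(\varphi_Q(x))=[x]$. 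This already gives 2). The morphism is surjective since the orbits generate the target, and it factors through $G(Q)^{ab}$ because the target is abelian.

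\textbf{Step 2: the inverse.} Write $\bar{x}$ for the image of the generator $x$ in $G(Q)^{ab}$. In $G(Q)^{ab}$ the defining relation $xyx^{-1}=x\triangleright y$ becomes $\bar{y}=\overline{x\triangleright y}$. Since $L_x$ is a bijection, we also have $\overline{L_x^{-1}(y)}=\overline{L_x(L_x^{-1}y)}=\bar{y}$. Hence $\bar{y}$ is invariant under the group generated by the $L_x$, which is $Inn(Q)$. Therefore $\bar{y}$ depends only on $[y]$, and by freeness there is a well-defined morphism $\mathbb{Z}Q/Inn(Q)\to G(Q)^{ab}$ sending $[y]\mapsto \bar{y}$.

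\textbf{Step 3: conclusion.} Both composites of the induced map $G(Q)^{ab}\to\mathbb{Z}Q/Inn(Q)$ with the map of Step 2 are the identity on generators: the orbits on one side and the $\bar{y}$ on the other. So the induced map is an isomorphism, which proves 1).

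The only point needing care is the invariance under inverses of the $L_x$ in Step 2, since $Inn(Q)$ is generated by the $L_x$ as a group and not just as a monoid. This is handled by the bijectivity of each $L_x$ as above. The rest is routine.
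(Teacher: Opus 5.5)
Your proof is correct and complete. The paper gives no proof of this proposition (it only says it is well known), and your argument is the standard one: the universal property of $\varphi_Q$ gives the surjection $G(Q)\to\mathbb{Z}Q/Inn(Q)$, $\varphi_Q(x)\mapsto[x]$, and the invariance of $\bar y$ under the $L_x^{\pm1}$ gives the inverse on $G(Q)^{ab}$. This works for any quandle, finite or not.

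One cosmetic remark: 2) is really established at the end of Step 3, once the map from Step 1 has been identified with the abelianisation map, not already in Step 1.
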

\begin{proposition}\label{prop}
\item[1)]  The intersection of $Z_1$ with derived group $A(G)'$ ($ker(Ab)$ also) of $A(G)$ is equal to the torsion subgroup $Tor(Z_1)$ of $Z_1$.
\item[2)] $Z_1/Tor(Z_1)$ is isomorphic to $\mathbb{Z}G/Inn(G)$.
\end{proposition}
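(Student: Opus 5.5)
The plan is to study the restriction of the abelianisation map $Ab : A(G)\to \mathbb{Z}G/Inn(G)$ to the central subgroup $Z_1=\ker \pi$. Since $\mathbb{Z}G/Inn(G)$ is free abelian, any torsion element of $A(G)$ maps to $0$, so $Tor(Z_1)\subset Z_1\cap A(G)'$. For the reverse inclusion and for part $2)$, I will show two things. First, $Z_1\cap A(G)'$ is finite; it is then contained in $Tor(Z_1)$, which gives $1)$. Second, $Ab(Z_1)$ has finite index in $\mathbb{Z}G/Inn(G)$, hence is free abelian of rank $c_G$. Since $Z_1/(Z_1\cap A(G)')\simeq Ab(Z_1)$, part $2)$ then follows.

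For the finite index, take $g\in G$ of order $n$. Then $\varphi_G(g)^n$ lies in $Z_1$, because $\pi(\varphi_G(g))=g$. Its image under $Ab$ is $n$ times the class of $g$. So $Ab(Z_1)$ contains a nonzero multiple of each basis element, and therefore has finite index, so it has rank $c_G$.

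For the finiteness of $Z_1\cap A(G)'$, note that $A(G)'$ is generated by commutators of elements of the form $\varphi_G(x)$. The quandle relation gives $\varphi_G(x)\varphi_G(y)\varphi_G(x)^{-1}=\varphi_G(xyx^{-1})$, so $A(G)'$ is generated by the finitely many elements $\varphi_G(xyx^{-1})\varphi_G(y)^{-1}$ with $x,y\in G$. To get a finiteness statement, I will use the structure of $A(G)$ as a central extension
$$1\to Z_1\to A(G)\to G\to 1$$
with $Z_1$ central. In such an extension $[A(G):Z(A(G))]$ is finite, so Schur's theorem gives that $A(G)'$ is finite. Then $Z_1\cap A(G)'$ is finite and hence torsion.

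Schur's theorem here requires $Z_1$ to be central and of finite index in $A(G)$. Both hold: centrality by the earlier proposition on $\ker\pi$, and finite index because $A(G)/Z_1\simeq G$ via the surjection $\pi$.

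The main obstacle I expect is the use of Schur's theorem (finite central quotient implies finite derived subgroup). If I want to avoid quoting it, I can argue directly as follows. In a central extension, the commutator $[a,b]$ depends only on $\pi(a)$ and $\pi(b)$, so $A(G)'$ is generated by finitely many commutators. Moreover, $A(G)'$ is a finitely generated group whose intersection with the center has finite index in it. The standard transfer argument then shows that $(Z_1\cap A(G)')^{|G|}$ is trivial. Together with finite generation this gives finiteness, which completes both parts.
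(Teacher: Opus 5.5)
Your proposal is correct and follows essentially the same route as the paper. Schur's theorem, applied because $Z_1$ is central of finite index, makes $A(G)'$ finite and hence $Z_1\cap A(G)'\subset Tor(Z_1)$; the torsion-freeness of $\mathbb{Z}G/Inn(G)$ gives the reverse inclusion; and the finite index of $Ab(Z_1)$ in that free abelian group yields $2)$, with your explicit $\varphi_G(g)^{n}\in Z_1$ argument and transfer sketch ($g\mapsto g^{|G|}$ into the central subgroup $Z_1$) merely filling in details the paper leaves implicit.
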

\begin{proof}
$Z_1$ lies in the center of $A(G)$ an has finite index. Hence, the center of $A(G)$ has finite index. It follows from Schurs theorem for the derived group that $A(G)'$ is finite. This poves that $A(G)'\cap Z_1 \subset Tor(Z_1)$. $1)$ of the proposition follows from the fact that the abelianisation of $A(G)$ is a free abelian group and hence $Tor(Z_1) \subset A(G)'$. We prove $2)$. $Z_1$ is of finite index $A(G)$ hence $Ab(Z_1)\simeq  Z_1/Tor(Z_1)$ is of finite index in the free abelian group $\mathbb{Z}G/Inn(G)$. $2)$ follows. 
\end{proof}
We will \textbf{denote} by $H_{S}^2(G,\mathbb{C}^\times)$ the subgroup of $H^2(G,\mathbb{C}^\times)$ corresponding to classes of symmetric $2$-cocycles. 
\begin{proposition}
The group $Tor(Z_1)$ is isomorphic to $H_S^2(G,\mathbb{C}^\times)$.
\end{proposition}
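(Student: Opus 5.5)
We have the central extension $1\to Z_1\to A(G)\xrightarrow{\pi} G\to 1$ with $Z_1$ central. The plan is to build the isomorphism through the transgression (inflation--restriction) map, and to use the representation theory of Section 2 to identify its image with $H_S^2(G,\mathbb{C}^\times)$.

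\textbf{Step 1: the transgression map.} Define
$$\delta:\mathrm{Hom}(Z_1,\mathbb{C}^\times)\to H^2(G,\mathbb{C}^\times).$$
Given $\lambda\in \mathrm{Hom}(Z_1,\mathbb{C}^\times)$, choose a set-theoretic section $s$ of $\pi$ with $s(g)=\varphi_G(g)$. This is a genuine section, since $\pi\circ\varphi_G=\mathrm{id}$. Write $s(g)s(h)=c(g,h)s(gh)$ with $c(g,h)\in Z_1$, and set $\delta(\lambda)=[\lambda\circ c]$. The five-term exact sequence for central extensions gives
$$\mathrm{Hom}(A(G),\mathbb{C}^\times)\to \mathrm{Hom}(Z_1,\mathbb{C}^\times)\xrightarrow{\delta} H^2(G,\mathbb{C}^\times).$$
The image of the restriction map consists of the characters of $Z_1$ that extend to $A(G)$. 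Since $\mathbb{C}^\times$ is divisible and $A(G)/A(G)'$ is free abelian (Proposition \ref{prop}), a character of $Z_1$ extends to $A(G)$ exactly when it is trivial on $Z_1\cap A(G)'=Tor(Z_1)$. Therefore
$$\ker\delta=\{\lambda:\lambda|_{Tor(Z_1)}=1\},$$
and hence
$$\mathrm{Im}\,\delta\simeq \mathrm{Hom}(Z_1,\mathbb{C}^\times)/\ker\delta\simeq \mathrm{Hom}(Tor(Z_1),\mathbb{C}^\times)\simeq Tor(Z_1).$$
The middle isomorphism uses divisibility of $\mathbb{C}^\times$; the last uses that $Tor(Z_1)$ is a finite abelian group.

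\textbf{Step 2: $\mathrm{Im}\,\delta\subseteq H^2_S(G,\mathbb{C}^\times)$.} Since $\varphi_G$ is a quandle morphism, $s(g)s(h)s(g)^{-1}=s(ghg^{-1})$. From this, $s(g)s(h)=s(ghg^{-1})s(g)$, so
$$c(g,h)\,s(gh)=c(ghg^{-1},g)\,s(gh),\quad\text{i.e.}\quad c(g,h)=c(ghg^{-1},g).$$
This is not literally symmetry, so I would redo the computation of Lemma \ref{lem}. The map $\gamma=\lambda$-twisted lift $g\mapsto s(g)$ viewed in the "representation" $\mathbb{C}^\times$-twisted setting satisfies $\gamma(ghg^{-1})=\gamma(g)\gamma(h)\gamma(g)^{-1}$. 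By that computation, its cocycle satisfies the equation of the first lemma of Section 2, hence is symmetric.

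Concretely, I would argue via representations instead. Pick any $\lambda$ and an irreducible constituent $\rho_{A(G)}$ of the representation induced from $\lambda$ on $Z_1$ to $A(G)$; this is finite-dimensional since $Z_1$ has finite index. Then $Z_1$ acts by $\lambda$ by Schur's lemma, and $\rho=\rho_{A(G)}\circ\varphi_G$ is an irreducible quandle representation. Its projective class $\bar\rho$ is $\delta(\lambda)$, up to the sign convention, because $\rho_{A(G)}(s(g))$ is a lift whose cocycle is $\lambda\circ c$. By Lemma \ref{lem}, this class has a symmetric representative.

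\textbf{Step 3: surjectivity onto $H^2_S$.} Conversely, let $x\in H^2_S(G,\mathbb{C}^\times)$. By the Schur reference there is an irreducible projective representation $\theta$ of class $x$, and by Lemma \ref{lem} an irreducible quandle representation $\rho$ with $\bar\rho=\theta$. Then $\rho_{A(G)}$ is irreducible, so the central subgroup $Z_1$ acts by some character $\lambda$. Computing with the section $s$ as in Step 2 gives $\delta(\lambda)=[\theta]=x$, so $\mathrm{Im}\,\delta=H^2_S(G,\mathbb{C}^\times)$. Combining with Step 1 yields
$$Tor(Z_1)\simeq \mathrm{Im}\,\delta=H^2_S(G,\mathbb{C}^\times).$$

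\textbf{Main obstacle.} The delicate point is the identification in Step 1, specifically that a character of $Z_1$ extends to $A(G)$ if and only if it kills $Tor(Z_1)$. The direction "extends $\Rightarrow$ kills $Tor(Z_1)$" is clear, since a character of $A(G)$ kills $A(G)'\supseteq Tor(Z_1)$. The converse needs the following: $Z_1/Tor(Z_1)$ embeds in the free abelian group $A(G)^{ab}$, so any character of $Z_1$ trivial on $Tor(Z_1)$ is a character of the subgroup $Ab(Z_1)$. This character extends to $\mathbb{Z}G/Inn(G)$ by divisibility of $\mathbb{C}^\times$, and pulling back along $Ab$ gives the required character of $A(G)$. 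Everything else is bookkeeping of cocycle conventions, which must be matched with those used in Lemma \ref{lem}.
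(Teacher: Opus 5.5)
Your proposal is correct and follows essentially the same route as the paper. Both use the five-term inflation--restriction sequence to get $\mathrm{coker}(Res)\simeq \mathrm{Hom}(Tor(Z_1),\mathbb{C}^\times)\simeq Tor(Z_1)$, and both identify the image of the transgression (which equals $\ker(Inf_\pi)$) with $H_S^2(G,\mathbb{C}^\times)$ via Lemma \ref{lem} and the existence of an irreducible projective representation in each class. Your Steps 2--3, using the central character of $Z_1$ on irreducible $A(G)$-representations, only make explicit the paper's one-line identification of $\ker(Inf_\pi)$ with the classes induced by quandle representations. Incidentally, the direct route you abandoned also works: applying the computation of Lemma \ref{lem} to $\gamma=s=\varphi_G$ inside $A(G)$ shows that the $Z_1$-valued cocycle $c$ is itself symmetric.
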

\begin{proof}
We have an inflation-restrection exact sequence associated to the exact sequence $1\to Z_1\to A(G) \overset{\pi}{\to} G \to 1$ : 
$$1 \to  Hom(G,\mathbb{C}^\times)\to Hom(A(G),\mathbb{C}^\times)\overset{Res}{\to} Hom(Z_1,\mathbb{C}^\times)\to H^2(G,\mathbb{C}^\times)\overset{Inf_\pi}{\to}H^2(A(G),\mathbb{C}^\times),$$
where $Res$ is the restriction map and $Inf_\pi$ is the inflation map associated to $\pi$. Since the abelianisation of $A(G)$ is free abelian and $Z_1\cap A(G)'=Tor(Z_1)$ it is easy to see that $cocker(Res)\simeq Hom(Tor(Z_1),\mathbb{C}^\times)\simeq Tor(Z_1)$. This proves that $ker(Inf_\pi) \simeq Tor(Z_1)$. On the other hand the kernel of $Inf_\pi$ correspond to classes of irreducible projective representation of $G$ that lift via $\pi$ to linear representation of the group $A(G)$ (equivalently are induced by quandle representations of $Conj(G)$). These classes are exactly the classes that has symmetric representatives (by lemma \ref{lem}). Therfore, $ker(Inf_\pi)\simeq H_S^2(G,\mathbb{C}^\times)$. Together with the isomorphism $ker(Inf_\pi) \simeq Tor(Z_1)$ we deduce the proposition.
\end{proof}
Form what we have seen in the last section. $H_S^2(G,\mathbb{C}^\times)\subset B_\mathbb{C}(G)\simeq B_0(G)$. In particular, if $B_0(G)=0$ then $H_S^2( G,\mathbb{C}^\times)=0$.
\begin{proposition}
The following equivalent conditions hold if and only if $H_S^2(G,\mathbb{C}^\times)=0$ $($i.e. $Tor(Z_1)=0$$)$.
\item[1)] The morphism $\pi \times Ab : A(G) \to G \times \mathbb{Z} G/Inn(G)$ is injective.
\item[2)] $A(G)'\simeq G'$, where $G'$ is the derived group of $G$.
\item[3)] $Z_1 \simeq \mathbb{Z} G/Inn(G)$.
\end{proposition}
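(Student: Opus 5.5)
By the preceding proposition, $Tor(Z_1)\simeq H_S^2(G,\mathbb{C}^\times)$. So it suffices to show that each of the conditions 1), 2), 3) is equivalent to $Tor(Z_1)=0$. The tools are Proposition \ref{prop}, the centrality of $Z_1$, and the fact that the abelianisation of $A(G)$ is the free abelian group $\mathbb{Z}G/Inn(G)$.

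For 1), the kernel of $\pi\times Ab$ is $\ker\pi\cap\ker Ab=Z_1\cap A(G)'$. By 1) of Proposition \ref{prop} this is $Tor(Z_1)$. Hence $\pi\times Ab$ is injective if and only if $Tor(Z_1)=0$.

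For 3), if $Tor(Z_1)=0$ then $Z_1=Z_1/Tor(Z_1)\simeq\mathbb{Z}G/Inn(G)$ by 2) of Proposition \ref{prop}. Conversely, if $Z_1\simeq\mathbb{Z}G/Inn(G)$, then $Z_1$ is free abelian, so its torsion subgroup is trivial. Here $\mathbb{Z}G/Inn(G)$ is free of finite rank because $G$ is finite.

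For 2), I will use the restriction $\pi|_{A(G)'}:A(G)'\to G'$. It is surjective: $\pi$ is surjective, since $\pi\circ\varphi_G=\mathrm{id}$, and the image of a derived subgroup under a surjection is the derived subgroup of the image. Its kernel is $Z_1\cap A(G)'=Tor(Z_1)$. So $Tor(Z_1)=0$ gives $A(G)'\simeq G'$ via $\pi$.

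The main obstacle is the converse of 2). The statement only asks for an abstract isomorphism $A(G)'\simeq G'$, not for $\pi$ itself to be one. I will handle this by a cardinality argument. Both groups are finite: $A(G)'$ is finite by Schur's theorem, as in the proof of Proposition \ref{prop}. The exact sequence
$$1\to Tor(Z_1)\to A(G)'\to G'\to 1$$
gives $|A(G)'|=|Tor(Z_1)|\,|G'|$. Therefore any isomorphism $A(G)'\simeq G'$ forces $|Tor(Z_1)|=1$.

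Since each of 1), 2), 3) is equivalent to $Tor(Z_1)=0$, they are mutually equivalent. By the previous proposition they hold exactly when $H_S^2(G,\mathbb{C}^\times)=0$.
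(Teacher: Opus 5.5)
Your proof is correct and takes the same approach as the paper, whose proof simply says the result follows from Proposition \ref{prop} together with the isomorphism $Tor(Z_1)\simeq H_S^2(G,\mathbb{C}^\times)$; you have written out those details. In particular, your cardinality argument for the converse of 2), using that $A(G)'$ is finite, supplies a step the paper leaves implicit: the statement only asks for an abstract isomorphism $A(G)'\simeq G'$.
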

\begin{proof}
This proposition follows from proposition \ref{prop} and the previous proposition.
\end{proof}
\begin{corollary}
If $G$ is perfect and $H_S^2(G,\mathbb{C}^\times)=0$, then $\pi \times Ab : A(G) \to G\times \mathbb{Z} G/Inn(G)$ is an isomorphism.
\end{corollary}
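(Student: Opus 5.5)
By the previous proposition, the hypothesis $H_S^2(G,\mathbb{C}^\times)=0$ already makes $\pi\times Ab : A(G)\to G\times \mathbb{Z}G/Inn(G)$ injective. What remains is surjectivity, and this is where perfectness of $G$ enters. I will show that the image contains $G\times\{0\}$ and contains elements whose $Ab$-components generate $\mathbb{Z}G/Inn(G)$. Together these give surjectivity.

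\emph{Step 1: the image contains $G\times\{0\}$.} Since $G$ is perfect, $G=G'$. The derived group $A(G)'$ lies in $\ker(Ab)$ by definition of the abelianisation. Also $\pi(A(G)')=\pi(A(G))'=G'=G$, because $\pi$ is surjective ($\pi\circ\varphi_G=\mathrm{id}$). So for each $g\in G$ there is $a\in A(G)'$ with $(\pi\times Ab)(a)=(g,0)$.

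\emph{Step 2: the second components generate.} By $2)$ of the proposition on abelianisation, $Ab(\varphi_G(g))$ is the class $[g]$ of $g$ in $G/Inn(G)$. These classes form the free basis of $\mathbb{Z}G/Inn(G)$. Now take any $(g,v)\in G\times\mathbb{Z}G/Inn(G)$ and write $v=\sum_i n_i [g_i]$. The element $x=\prod_i\varphi_G(g_i)^{n_i}$ has image $(h,v)$ for some $h\in G$. By Step 1, pick $a\in A(G)'$ with image $(gh^{-1},0)$. Then $ax$ maps to $(g,v)$, which proves surjectivity.

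The argument is essentially formal. The main point to handle carefully is Step 1: the identification $\pi(A(G)')=G'$ needs surjectivity of $\pi$ and the inclusion $A(G)'\subset\ker(Ab)$, and the only place perfectness is used is $G'=G$. Alternatively, one could argue via $2)$ of the previous proposition: $A(G)'\simeq G'=G$ under $\pi$, and combine this with $3)$, $Z_1\simeq\mathbb{Z}G/Inn(G)$. The direct surjectivity argument above seems cleanest.
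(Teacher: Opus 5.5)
Your proof is correct and follows the paper's implicit route: the paper states this corollary without proof, as a consequence of the preceding proposition. Injectivity is exactly condition 1) of that proposition, and your surjectivity argument supplies the step the paper leaves unwritten: $\pi(A(G)')=G'=G$ with $A(G)'\subset\ker(Ab)$, then correcting the $G$-component of $\prod_i\varphi_G(g_i)^{n_i}$. You were right to prefer this direct argument over the ``alternative'', because in the paper 3) is only an abstract isomorphism $Z_1\simeq\mathbb{Z}G/Inn(G)$ obtained by a rank count, and that alone does not show the specific map $\pi\times Ab$ is onto.
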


 \end{document}